\newtheorem{theorem}{Theorem}[section]
\newtheorem{lemma}[theorem]{Lemma}
\newtheorem{corollary}[theorem]{Corollary}
\newtheorem{conjecture}[theorem]{Conjecture}
\renewcommand{\leq}{\leqslant}
\renewcommand{\geq}{\geqslant}
\def\eps{\varepsilon}
\def\Q{\mathbf{Q}}
\def\N{\mathbf{N}}
\def\E{\mathbf{E}}
\def\P{\mathbf{P}}
\numberwithin{equation}{section}
\begin{document}

\title{Commuting probabilities of finite groups}



\author{Sean Eberhard}


\maketitle
\def\cP{\mathcal{P}}
\begin{abstract}
The commuting probability of a finite group is defined to be the probability that two randomly chosen group elements commute. Let $\cP\subset(0,1]$ be the set of commuting probabilities of all finite groups. We prove that every point of $\cP$ is nearly an Egyptian fraction of bounded complexity. As a corollary we deduce two conjectures of Keith Joseph from 1977: all limit points of $\cP$ are rational, and $\cP$ is well ordered by $>$. We also prove analogous theorems for bilinear maps of abelian groups.
\end{abstract}

\tableofcontents

\section{Introduction}

Suppose we measure the abelianness of a finite group $G$ by counting the number of pairs of elements of $G$ which commute. Call
\[
	\Pr(G) = \P_{x,y\in G}(xy=yx) = \frac{1}{|G|^2} |\{(x,y)\in G^2 : xy=yx\}|
\]
the \emph{commuting probability} of $G$. Then a possibly surprising first observation is that a group $G$ with $\Pr(G)\approx 1$ must actually satisfy $\Pr(G)=1$. In fact if $\Pr(G)<1$ then $\Pr(G)\leq 5/8$. After such an observation it is natural to wonder what the rest of the set
\[
	\cP=\{\Pr(G): G\text{ a finite group}\}
\]
looks like. For instance, is there some $\eps>0$ such that if $\Pr(G)<5/8$ then $\Pr(G)\leq 5/8-\eps$? Is there some interval in which $\cP$ is dense? These sorts of questions were first studied in general by Keith Joseph~\cite{joseph1,joseph2}, who made the following three conjectures.

\begin{conjecture}[Joseph's conjectures]$ $\leavevmode
\begin{enumerate}
	\item[J1.] All limit points of $\cP$ are rational.
	\item[J2.] $\cP$ is well ordered by $>$.
	\item[J3.] $\{0\}\cup\cP$ is closed.
\end{enumerate}
\end{conjecture}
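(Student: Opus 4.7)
I would derive J1 and J2 from the main structural theorem advertised in the abstract---that every $p\in\cP$ lies within a small error of an Egyptian fraction of bounded complexity. J3 appears to require substantially more precise error control and I would leave it aside, since a ``nearly'' result by itself does not place the approximated fraction back into $\cP$.

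The first step is a reduction to a bilinear problem. Peter Neumann's classical theorem---$\Pr(G)\geq\eps$ implies $G$ has a normal subgroup of index and commutator both bounded in terms of $\eps$---isolates a bounded piece of $G$ carrying all the non-abelian structure. In the cleanest case $G$ is class-$2$ nilpotent, and then $(x,y)\mapsto[x,y]$ factors through $G/Z(G)\times G/Z(G)$ as an alternating bilinear map $B:V\times V\to W$ of finite abelian groups, with $\Pr(G)=\Pr(B)$ exactly. In general, a few rounds of reduction (trading away bounded-order factors, centralisers of coset representatives, and so on) should produce $\Pr(G)=c\,\Pr(B)+O(1/N)$ for some bounded rational $c$, some alternating bilinear $B$, and $N$ comparable to the ``large'' abelian part of $G$. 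This is exactly why the paper announces an analogous theorem for bilinear maps in parallel.

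For such a $B$, Pontryagin duality rewrites $\Pr(B)$ as an average over $\hat V$ of reciprocals of indices of kernels of the adjoint maps $W\to\hat V$. Primary decomposition of $V$ and $W$ splits this prime-by-prime, and the constraint $\Pr(B)\geq\eps$ forces only boundedly many primes to contribute non-trivially (otherwise the product of the per-prime factors drops below $\eps$). Collecting terms expresses $\Pr(B)$ as a sum of boundedly many rationals of simple shape $a/n$, which is the desired Egyptian-fraction form. J1 and J2 then follow from a soft topological argument: the set of bounded-complexity Egyptian fractions in $[\eps,1]$ has only rational accumulation points and is well-ordered by $>$, because at each fixed scale only finitely many such fractions appear. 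Given $p_n\to p$ in $\cP\cap[\eps,1]$ with $p_n=q_n+\eta_n$, a subsequence makes $q_n$ constant, and the error estimate forces $p\in\Q$; the same discreteness transfers the well-ordering from the Egyptian set to $\cP$ itself, giving J2.

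The main obstacle is the bilinear step. A naive Fourier-analytic treatment only shows $\Pr(B)$ is a rational of bounded height, which is too weak; extracting the bounded-length Egyptian-fraction shape requires finer structural information about which primary components and which orbits actually contribute non-trivially, and an honest induction on the exponents involved. Propagating this through the group-to-bilinear reduction while keeping the error term uniform in the ``large'' abelian part---and in particular not accumulating loss at each step of the reduction---appears to be the technical heart of the argument.
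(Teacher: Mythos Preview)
Your broad strategy---Neumann's theorem to find a bounded-index subgroup with small commutator, then an Egyptian-fraction bound on the remaining piece---matches the paper. But two genuine gaps remain.

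First, and most importantly, you are missing the \emph{amplification} of Neumann's theorem, which is the technical heart of the paper. Neumann alone gives a normal $H\leq G$ with bounded index and bounded $|[H,H]|$, and then $\Pr(G)=|G/H|^{-2}\Pr(H)+\eps$ where $\eps$ counts commuting pairs outside $H^2$. Your $O(1/N)$ control on $\eps$, with $N$ the size of some large abelian piece, is not what is needed and not what is true without further work: nothing in Neumann's theorem prevents many commuting pairs from living outside $H^2$. The paper instead proves that for \emph{any} decreasing $\eta:\N\to(0,1)$ one can choose $H$ (via an iterative enlargement argument using Baer's generalisation of Schur's theorem) so that the number of such exceptional pairs is at most $\eta(|[H,H]|)|G|^2$. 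This dependence of the error on $|[H,H]|$---and hence on $\cE(q)$---is exactly what makes the deduction of J1 and J2 go through.

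Second, your topological endgame is not quite right. You write ``a subsequence makes $q_n$ constant,'' but the set $\{q:\cE(q)\leq M\}$ is infinite and has plenty of accumulation points (e.g.\ $1/2+1/n\to 1/2$), so no such subsequence need exist. What is true, and what the paper uses, is that this set has no accumulation points \emph{from below}: for each $x$ the quantity $Q(M,x)=\sup\{q<x:\cE(q)\leq M\}$ is strictly less than $x$. One then chooses $\eta_x(m)=\tfrac12(x-Q(m,x))$ in the amplified theorem, so that any $p=q+\eps$ with $q<x$ is forced to satisfy $p\leq\tfrac12(Q(M,x)+x)<x$, bounded away from $x$. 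This gives both J1 and J2.

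A minor point: your bilinear step is harder than necessary. For $\phi:A\times B\to C$, orthogonality of characters gives directly
\[
\Pr(\phi)=\frac{1}{|C|}\sum_{\gamma\in\widehat C}\frac{1}{[A:\{a:\gamma(\phi(a,B))=1\}]},
\]
so $\cE(\Pr(\phi))\leq|C|$ with no primary decomposition or prime-counting needed. The group analogue (bounding $\cE(\Pr(G))$ by a function of $|[G,G]|$) uses the second centre and Hall's theorem, again avoiding any reduction to a genuinely bilinear problem. You are right that J3 is not addressed; the paper proves only J1 and J2.
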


Note that conjectures J1 and J2, if true, answer our questions above about the structure of $\cP$, for J1 implies that $\cP$ is nowhere dense, and J2 implies that to every $p\in\cP$ we can associate some $\eps>0$ such that $(p-\eps,p)\cap\cP=\emptyset$. Progress on J1 and J2 has been slow, however. The best partial result to date is due to Hegarty~\cite{hegarty}, who proved that J1 and J2 hold for the set $\cP\cap(2/9,1]$.

From Hegarty's work one can begin to see a connection between commuting probability and so-called Egyptian fractions. The purpose of the present paper is to further develop this connection, and to use it to prove J1 and J2.

\def\cE{\mathcal{E}}
Define the \emph{Egyptian complexity} $\cE(q)$ of a rational number $q>0$ to be the least positive integer $m$ such that $q$ can be written as a sum of reciprocals
\[
	q = 1/n_1 + \cdots + 1/n_m,
\]
with each $n_i$ a positive integer, agreeing that $\cE(0)=0$ and that $\cE(x)=\infty$ if $x$ is irrational. We prove the following structure theorem for the values of $\Pr(G)$, which roughly asserts that commuting probabilities are nearly Egyptian fractions of bounded complexity.

\begin{theorem}\label{thm:main}
For every decreasing function $\eta:\N\to(0,1)$ there is some $M=M(\eta)\in\N$ such that every commuting probability $\Pr(G)$ has the form $q+\eps$, where $\cE(q)\leq M$ and $0\leq\eps\leq \eta(\cE(q))$.
\end{theorem}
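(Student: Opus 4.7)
The plan is to approximate $\Pr(G)$ by peeling off a bounded number of unit-fraction contributions coming from the large centralizers of $G$, and then to reduce the residual to the bilinear-map setting promised in the abstract.

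The first step is truncation. Writing $\Pr(G)=k(G)/|G|=\sum_{k\ge 1}n_k/|G|$, where $n_k$ denotes the number of conjugacy classes of size exactly $k$, and using $\sum_k k n_k=|G|$, one gets the tail bound $\sum_{k>T}n_k/|G|\le 1/T$. Choosing $T=T(\eta)$ large enough I can discard all classes of size exceeding $T$ at a cost of at most $1/T$ in the approximation error. What remains is a finite sum of contributions from classes whose centralizers have index at most $T$ in $G$.

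In the second step I isolate the structure produced by these low-index centralizers. By a B.H.~Neumann-style argument, the intersection $N$ of all centralizers $C_G(x)$ with $[G:C_G(x)]\le T$ is a normal subgroup of $G$ of index bounded in terms of $T$ alone, and every low-index centralizer contains $N$. The centre $Z(G)$ contributes $1/[G:Z(G)]$, already a unit fraction; and each other low-index centralizer $H\supseteq N$ of index $k$ contributes, via the elements of $Z(H)$ not lying in a strictly larger centralizer, a quantity of the form $m/|G|$ with $m$ bounded in terms of $T$ and $k$ dividing $[G:N]$, which can therefore be rewritten as a bounded sum of unit fractions $1/k$. Collecting these pieces yields a partial Egyptian sum $q_0$ whose complexity is bounded in terms of $T$ alone.

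The third step is to show that the residual $\Pr(G)-q_0$ is essentially the commuting probability of a bilinear (alternating) map $B\colon A\times A\to Z$ for certain abelian sections $A,Z$ of $G$ built from $N/[N,N]$ and $[G,G]\cap Z(N)$, and then to invoke the paper's analogous theorem for bilinear maps of abelian groups as a black box to further approximate the residual by a bounded number of unit fractions with controlled error. Iterating if necessary and chaining the truncation thresholds appropriately delivers $M=M(\eta)$.

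The main obstacle is the third step: identifying the residual cleanly as a genuine commuting probability of a bilinear map without double-counting pieces already absorbed into $q_0$, and bootstrapping the tolerances $\eta(\cE(q))$ through the iteration so that they do not blow up as $\cE(q)$ grows. The analogous theorem for bilinear maps and the present theorem will plausibly be proved in tandem by a joint induction, with the delicate bookkeeping lying in matching the thresholds $T$, the complexity $M$, and the function $\eta$ at each recursion level.
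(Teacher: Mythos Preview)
Your second step contains a genuine error. The claim that the intersection $N=\bigcap_{[G:C_G(x)]\le T} C_G(x)$ has index bounded in terms of $T$ alone is false. Take $G$ extraspecial of order $p^{2n+1}$: every non-central element has centraliser of index exactly $p$, so with $T=p$ the set of relevant centralisers is the set of all $C_G(x)$ for $x\notin Z(G)$, and their intersection is $Z(G)$ itself, of index $p^{2n}$. Thus $[G:N]$ is unbounded while $T$ stays fixed. No B.~H.~Neumann covering lemma will rescue this, because you are intersecting an unbounded family of subgroups, not covering $G$ by boundedly many cosets. Consequently the subsequent claim that the low-index-centraliser contribution can be packaged into a \emph{bounded} number of unit fractions has no support: in the extraspecial example there are $(p^{2n}-1)/(p-1)$ distinct index-$p$ centralisers, and your bookkeeping scheme gives no handle on this.

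The paper avoids this pitfall by not attempting to enumerate centralisers at all. Instead it first proves an \emph{amplified} form of P.~Neumann's theorem: there is a normal subgroup $H\trianglelefteq G$ with $|[H,H]|$ bounded such that almost every commuting pair already lies in $H\times H$. This gives $\Pr(G)=\Pr(H)/[G:H]^2+\eps$ with $\eps$ small in a single stroke. The Egyptian bound on $\Pr(H)$ is then obtained, not by reducing to the bilinear theorem as you propose, but by a direct character computation combined with Hall's theorem bounding $[G:Z_2(G)]$ in terms of $|[G,G]|$, yielding $\cE(\Pr(H))\le |H/Z_2(H)|\cdot|[H,H]|$. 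So there is no joint induction between the group and bilinear cases; they are proved in parallel, each via its own ``amplified Neumann'' step followed by its own Egyptian-complexity lemma. Your proposed reduction of the residual to a bilinear map is plausible in spirit (the commutator on $H/Z(H)$ is bilinear into $[H,H]\cap Z(H)$), but as stated it is too vague to constitute a proof, and in any case it rests on the faulty step~2.
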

\begin{corollary}\label{cor:joseph}
All limit points of $\cP$ are rational, and $\cP$ is well ordered by $>$.
\end{corollary}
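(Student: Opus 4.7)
My plan is to deduce both J1 and J2 from Theorem~\ref{thm:main} by exploiting two structural properties of the set $E_m := \{q \geq 0 : \cE(q) \leq m\}$: (i) $E_m$ is closed in $\R$, and (ii) $E_m$ has no infinite strictly increasing sequence (equivalently, every nonempty bounded-above subset of $E_m$ admits a maximum). Both I would prove by induction on $m$: given a sequence $q_n = 1/a_{1,n} + \cdots + 1/a_{m,n} \in E_m$ with $a_{1,n} \leq \cdots \leq a_{m,n}$, pass to a subsequence where each $a_{i,n}$ either stabilizes or tends to $\infty$. Diverging denominators contribute nothing to the limit, giving (i). For (ii), if $q_n$ were strictly increasing the bound $q_n \leq m/a_{1,n}$ would force $a_{1,n}$ to stabilize, leaving $q_n - 1/a_{1,n}$ as a strictly increasing sequence in $E_{m-1}$ and reducing to the trivial base $m=0$.

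For J1, I argue by contradiction. Suppose $p$ is an irrational limit point of $\cP$, witnessed by $\Pr(G_n) \to p$. Since each $E_m \subset \Q$ is closed, the distances $\delta_m := \mathrm{dist}(p, E_m)$ are all positive. I would choose a decreasing function $\eta : \N \to (0, 1)$ with $\eta(m) < \delta_m/2$ for every $m$, and let $M = M(\eta)$ be as in Theorem~\ref{thm:main}. Writing $\Pr(G_n) = q_n + \eps_n$ and passing to a subsequence where $\cE(q_n) = m$ is constant (only finitely many possible values), the bound $\eps_n \leq \eta(m) < \delta_m/2$ combined with $\Pr(G_n) \to p$ forces $|q_n - p| < \delta_m$ for large $n$, contradicting $q_n \in E_m$.

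For J2, suppose for contradiction that $p_1 < p_2 < \cdots$ is an infinite strictly increasing sequence in $\cP$ with supremum $p = \lim p_n$. Then $p_n < p$ for all $n$, so $p$ is a limit point of $\cP$, hence rational by the just-proved J1. Property (ii) guarantees that $E_m \cap [0, p)$, whenever nonempty, has a maximum $q_m^*$; set $\gamma_m := p - q_m^* > 0$ (and $\gamma_m := p$ otherwise). Choosing a decreasing $\eta$ with $\eta(m) < \gamma_m/2$ and applying Theorem~\ref{thm:main}, the decomposition $p_n = q_n + \eps_n$ on a subsequence where $\cE(q_n) = m$ is constant yields $q_n \in E_m \cap [0, p)$, hence $q_n \leq q_m^*$; thus $p_n \leq q_m^* + \eta(m) < p - \gamma_m/2$, contradicting $p_n \to p$.

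I expect the main obstacle to be the careful inductive verification of the structural lemmas (i) and (ii) for $E_m$—particularly the well-ordering (ii), which underlies both deductions. Once those are in hand, the rest is just pigeonhole on the finitely many values of $\cE(q_n)$ together with the appropriate choice of $\eta$.
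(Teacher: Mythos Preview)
Your proposal is correct and takes essentially the same approach as the paper: both rest on the same two facts about $E_m=\{q:\cE(q)\leq m\}$ (closed, no infinite strictly increasing sequences), and your $\gamma_m/2$ in the J2 step is exactly the paper's $\eta_x(m)=(x-Q(m,x))/2$. The only difference is organizational---the paper handles J1 and J2 in a single pass by showing any sequence in $\cP$ converging to $x$ must eventually be $\geq x$, whereas you treat the two conclusions separately---but the underlying ideas are identical.
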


We also prove a version of the above theorem for bilinear maps, partly as a model problem and partly for independent interest. Given finite abelian groups $A$, $B$, $C$ and a bilinear map $\phi:A\times B\to C$, let
\[
	\Pr(\phi) = \P_{a\in A,b\in B}(\phi(a,b)=0) = \frac{1}{|A||B|}|\{(a,b)\in A\times B: \phi(a,b)=0\}|.
\]
Let $\cP_{\text{b}}$ be the set of all $\Pr(\phi)$, where $\phi$ is such a bilinear map.

\begin{theorem}\label{thm:mainbilinear}
For every decreasing function $\eta:\N\to(0,1)$ there is some $M=M(\eta)\in\N$ such that every bilinear zero probability $\Pr(\phi)$ has the form $q+\eps$, where $\cE(q)\leq M$ and $0\leq\eps\leq \eta(\cE(q))$.
\end{theorem}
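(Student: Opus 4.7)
The plan is to exploit the identity
\[
	\Pr(\phi) = \frac{1}{|A|}\sum_{a\in A}\frac{1}{k(a)}, \qquad k(a) := |\phi(a,B)|,
\]
which expresses $\Pr(\phi)$ as an average of reciprocals. Unlike in the commuting-probability case, the bilinear structure provides genuine subgroups to work with: writing $K_L = \{a\in A : \phi(a,\cdot)\equiv 0\}$ for the left kernel, the set $A^{(n)} := \{a\in A : na\in K_L\}$ is a subgroup of $A$ for every positive integer $n$, and $a\in A^{(n)}$ precisely when the exponent $e(a)$ of $\phi(a,B)\subset C$ divides $n$. This will be the main structural tool.

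First I would truncate the sum. Fix a parameter $K$, write $L_K = \operatorname{lcm}(1,\dots,K)$, and set $A' = A^{(L_K)}$. Any $a\notin A'$ has $e(a)\nmid L_K$ and hence $e(a) > K$, so $k(a)\geq e(a)> K$; the tail contribution is therefore at most $1/(K+1)$. This gives $\Pr(\phi) = q+\eps$ with
\[
	q = \Pr(\phi')/[A:A'], \qquad \phi' := \phi|_{A'\times B}, \qquad 0\leq\eps\leq 1/(K+1).
\]
Since the reduced map $\phi'$ takes values in $C[L_K]$, its $p$-primary components vanish for all primes $p>K$, and so $\Pr(\phi') = \prod_{p\leq K}\Pr(\phi'_p)$ is a product over only finitely many primes.

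Next, for each $p\leq K$, I would analyse $\Pr(\phi'_p)$ using the filtration $H_j = \{a\in A'_p : p^j a\in K_L\}$ of $A'_p$. Abel summation along this chain should produce an Egyptian representation of $\Pr(\phi'_p)$ of complexity bounded purely in terms of $p$ and $\lfloor \log_p K\rfloor$. Multiplying across the finite set of primes $p\leq K$ (using that a product of unit fractions is a unit fraction) yields $\cE(q) \leq M(K)$ for some explicit function $M$ depending only on $K$. The theorem then follows by choosing $K = K(\eta)$ large enough that $1/(K+1)\leq \eta(M(K))$ and setting $M(\eta) := M(K(\eta))$; since $\eta$ is decreasing and $\cE(q)\leq M(\eta)$, we get $\eps \leq \eta(M(\eta)) \leq \eta(\cE(q))$ as required.

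The main obstacle will be controlling the Egyptian complexity at the $p$-primary stage. Abel summation on the filtration $H_j$ naturally computes $|A'_p|^{-1}\sum_a 1/e(a)$ rather than $\Pr(\phi'_p) = |A'_p|^{-1}\sum_a 1/k(a)$, and these two quantities agree only when $\phi(a,B)$ is always cyclic. To recover an exact Egyptian expression I would expect to either iterate the truncation inside each $\phi'_p$ with a secondary invariant such as the rank of $\phi(a,B)\subset C[L_K]_p$, or carry out an inclusion--exclusion over the sub-lattice $\{A^{(d)} : d\mid L_K\}$. Either route should keep the bound $M(K)$ a function of $K$ alone, which is what a uniform $M(\eta)$ ultimately requires.
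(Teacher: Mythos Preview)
Your truncation by exponent is a natural first move and correctly produces $\Pr(\phi)=q+\eps$ with $0\leq\eps\leq 1/(K+1)$, but the argument is incomplete exactly where you say it is, and neither of your suggested fixes closes the gap. The filtration $H_j=\{a:p^ja\in K_L\}$ records only the \emph{exponent} $e(a)$ of $\phi(a,B)$, not its order $k(a)=|\phi(a,B)|$. For an $\F_p$-bilinear map every $e(a)$ lies in $\{1,p\}$, so your filtration collapses to the two-step chain $K_L\subset A'_p$, while $k(a)$ can still range over all of $1,p,\dots,p^n$ (take for instance the coordinatewise product $(\Z/p)^n\times(\Z/p)^n\to(\Z/p)^n$); you are left needing a uniform bound on $\cE(\Pr(\psi))$ over all $\F_p$-bilinear $\psi$, and nothing in the proposal supplies one. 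The rank level-sets $\{a:\dim_{\F_p}\phi(a,B)\leq j\}$ are not subgroups of $A$ (in the coordinatewise example $e_1$ and $e_2$ have rank $1$ but $e_1+e_2$ has rank $2$), so ``iterating with rank as a secondary invariant'' has no subgroup to truncate to; and the lattice $\{A^{(d)}:d\mid L_K\}$ is determined entirely by the exponents $e(a)$, so no inclusion--exclusion over it can recover the orders $k(a)$.

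The paper proceeds differently. Rather than cutting $A$ by the exponent of $\phi(a,B)$, it invokes an amplified Neumann-type theorem (Theorem~\ref{thm:neumannbilinear2}) to produce subgroups $A'\leq A$ and $B'\leq B$ for which the image $\phi(A',B')$ has bounded \emph{order}, not merely bounded exponent, and such that almost every pair $(a,b)$ with $\phi(a,b)=0$ already lies in $A'\times B'$. Once $|\phi(A',B')|$ is bounded, the character identity of Lemma~\ref{lem:bilinear} gives $\cE(\Pr(\phi|_{A'\times B'}))\leq|\phi(A',B')|$ immediately. The amplification step is also what makes the final bookkeeping go through: it is arranged so that the number of exceptional pairs is at most $\eta(|\phi(A',B')|)\,|A||B|$, whence $\eps\leq\eta(|\phi(A',B')|)\leq\eta(\cE(q))$ automatically. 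In your scheme, by contrast, you must solve $1/(K+1)\leq\eta(M(K))$ for $K$; for rapidly decaying $\eta$ this inequality can fail for every $K$ as soon as $M(K)\to\infty$, so the single-cutoff strategy would need to be replaced by exactly the kind of $\eta$-adaptive iteration that Theorem~\ref{thm:neumannbilinear2} performs.
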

\begin{corollary}\label{cor:josephbilinear}
All limit points of $\cP_\textup{b}$ are rational, and $\cP_{\textup{b}}$ is well ordered by $>$.
\end{corollary}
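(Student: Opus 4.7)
The plan is to deduce both assertions from Theorem~\ref{thm:mainbilinear} combined with two elementary structural properties of $E_{\leq M}:=\{q\in\Q_{\geq 0}:\cE(q)\leq M\}$.

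I would first prove, by a compactness argument, that $E_{\leq M}$ is closed in $\R$ and contains no infinite strictly increasing sequence. Given $q_k=\sum_{i=1}^{M}1/n_{k,i}\in E_{\leq M}$, pass to a subsequence on which each $n_{k,i}$ is either constant in $k$ or tends to $\infty$; writing $I$ for the constant indices, the limit is $q^*=\sum_{i\in I}1/n_{*,i}\in E_{\leq M}$, giving closedness. If the $q_k$ were strictly increasing, then $q_k<q^*$ for all $k$, but $q_k-q^*=\sum_{i\notin I}1/n_{k,i}\geq0$, forcing $I=\{1,\ldots,M\}$ and so $q_k=q^*$ constant, a contradiction. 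Two consequences of this lemma: if $p\notin E_{\leq M}$ then $d(p,E_{\leq M})>0$, and if $p>0$ then $q^{(M)}_{\max}:=\max\{q\in E_{\leq M}:q<p\}$ exists and is strictly less than $p$.

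To show that every limit point of $\cP_{\textup{b}}$ is rational, I suppose for contradiction that an irrational $p$ is approached by $p_k\in\cP_{\textup{b}}$ with $p_k\neq p$. Since $E_{\leq k}\subset\Q$ and $p$ is irrational, $p\notin E_{\leq k}$, so $\eta(k):=d(p,E_{\leq k})/2^{k+1}$ is a positive strictly decreasing function $\N\to(0,1)$. Feeding $\eta$ into Theorem~\ref{thm:mainbilinear} gives $M=M(\eta)$ and decompositions $p_k=q_k+\eps_k$ with $\cE(q_k)\leq M$ and $\eps_k\leq\eta(\cE(q_k))$. Pigeonholing $\cE(q_k)=c$ constant and taking $k$ large enough that $|p-p_k|<\eta(c)$ yields
\[
|p-q_k|\leq|p-p_k|+\eps_k<2\eta(c)\leq d(p,E_{\leq c}),
\]
contradicting $q_k\in E_{\leq c}$.

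For the well-ordering of $\cP_{\textup{b}}$ by $>$, I suppose a strictly increasing sequence $p_1<p_2<\cdots$ in $\cP_{\textup{b}}$ converges to $p$. By the previous paragraph $p$ is rational, and $p\geq p_1>0$. Define $\eta(k):=(p-q^{(k)}_{\max})/2^{k+1}$, which is positive and strictly decreasing. Applying the theorem and pigeonholing $\cE(q_k)=c$: if some $q_k\geq p$ then $p_k\geq q_k\geq p$, contradicting $p_k<p$; otherwise $q_k\leq q^{(c)}_{\max}$ for all $k$, whence
\[
p_k=q_k+\eps_k\leq q^{(c)}_{\max}+\eta(c)\leq(p+q^{(c)}_{\max})/2<p,
\]
keeping $p_k$ bounded away from $p$ and contradicting $p_k\to p$. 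The only real work lies in the preliminary lemma on $E_{\leq M}$; granted that, both halves of the corollary reduce to a careful choice of $\eta$ adapted to the candidate limit $p$.
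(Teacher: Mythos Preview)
Your proof is correct and follows essentially the same approach as the paper: both rest on the elementary lemma that $E_{\leq M}$ is closed and admits no strictly increasing sequence, and both choose $\eta$ to encode the gap between the target limit point and the Egyptian fractions of each complexity lying below it. The paper is slightly more economical in that it treats rationality and well-ordering in a single pass with one function $\eta_x(m)=(x-Q(m,x))/2$ (your second $\eta$ without the $2^{k+1}$ factor), deducing $q_n\geq x$ eventually and then reading off rationality from closedness of $E_{\leq M}$, whereas you run two separate arguments with two choices of $\eta$ and an extra pigeonholing on $\cE(q_k)=c$; but these are organizational differences, not substantive ones.
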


The proofs of Theorems~\ref{thm:main} and~\ref{thm:mainbilinear} rely on a theorem of Neumann~\cite{pneumann} which states that if a group $G$ is statistically close to abelian in the sense that $\Pr(G)$ is bounded away from $0$ then $G$ is structurally close to abelian in the sense that $G$ has a large abelian section. We prove an amplified version of this theorem in Section~\ref{sec:neumann} and we use it to deduce Theorems~\ref{thm:main} and~\ref{thm:mainbilinear} in Section~\ref{sec:main}. We deduce Joseph's conjectures J1 and J2 in Section~\ref{sec:joseph}.
%

Assuming J2 holds, Joseph also asked for the order type of $(\cP,>)$. We consider this question in Section~\ref{sec:ordertype}. By examining the proof of Theorem~\ref{thm:main} we reduce the number of possibilities for the order type to two.

\begin{theorem}\label{thm:ordertype}
The order type of $(\cP,>)$ is either $\omega^\omega$ or $\omega^{\omega^2}$.
\end{theorem}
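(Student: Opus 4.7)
The strategy is to determine the order type $\alpha$ of $(\cP,>)$ by matching upper and lower bounds, modulo an irreducible dichotomy extracted from the proof of Theorem~\ref{thm:main}. We know $\alpha$ is a countable ordinal by Corollary~\ref{cor:joseph}, so the task is to pin it down.

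\textbf{Lower bound.} For each $k$, I would exhibit explicit families of finite groups whose commuting probabilities realize an $\omega^k$-ordered subset of $\cP$. Using multiplicativity $\Pr(A\times B)=\Pr(A)\Pr(B)$ together with the identity $\Pr(G)=k(G)/|G|$, one can combine extraspecial $p$-groups, Heisenberg-type groups and semidirect products to produce commuting probabilities arbitrarily close to any prescribed Egyptian fraction of complexity $k$ with a preassigned largest reciprocal, next-largest reciprocal, and so on. Assembling these over all $k$ produces a subset of $\cP$ of order type $\omega^\omega$, giving $\alpha\geq\omega^\omega$.

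\textbf{Upper bound.} Let $\cQ_M$ denote the set of positive rationals of Egyptian complexity at most $M$; ordered by $>$, it has order type $\omega^M$, so $\cQ=\bigcup_M\cQ_M$ has order type $\omega^\omega$. The idea is to iterate Theorem~\ref{thm:main}: with $\eta$ decaying sufficiently fast and repeated application to the residual $p-q$, each $p\in\cP$ should acquire a finite-or-length-$\omega$ expansion $p=q_1+q_2+\cdots$ with each $q_i\in\cQ$ and $q_{i+1}$ negligible relative to $q_i$. The residual is not itself a commuting probability, so carrying out this iteration needs more than just the statement of Theorem~\ref{thm:main}: one has to extract a structural property of the form of $p-q$ from the proof. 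Granting this, the map $p\mapsto(q_1,q_2,\ldots)$ is lexicographically order-preserving into a product of at most $\omega$ copies of $\cQ$, and so $\alpha\leq(\omega^\omega)^\omega=\omega^{\omega^2}$.

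\textbf{Dichotomy and main obstacle.} What remains, and is the heart of the matter, is to rule out every intermediate order type of the form $\omega^{\omega\cdot k}$ for $1<k<\omega$. The plan is an amplification: if even one $p\in\cP$ genuinely requires a nontrivial second-level term in its iterated expansion, then taking direct products and choosing factors carefully promotes this second-level structure to arbitrarily large finite depth, forcing $\alpha=\omega^{\omega^2}$; if no such $p$ exists, then $\cP$ is order-isomorphic to a subset of $\cQ$ and $\alpha=\omega^\omega$. Making the amplification rigorous — showing that a single occurrence of depth $2$ bootstraps to unbounded depth, so that no bounded depth $k>1$ can serve as a true ceiling — is where I expect the main difficulty to lie, and it is this step that the proof of Theorem~\ref{thm:main} must be scrutinised to yield.
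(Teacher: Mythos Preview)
Your plan diverges substantially from the paper's, and the step you flag as the main obstacle---ruling out order types $\omega^{\omega\cdot k}$ for $1<k<\omega$---is a genuine gap that the paper sidesteps entirely by a different mechanism.

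The paper does not argue about order types directly. It passes to the Cantor--Bendixson rank $\alpha$ of the closure $\overline{\cP}$, so that the order type of $(\cP,>)$ becomes $\omega^\alpha$. The decisive observation is then a short self-contained lemma: any countably infinite closed subset of $[0,1]$ that is closed under multiplication has Cantor--Bendixson rank of the form $\omega^\beta$. Since $\Pr(G\times H)=\Pr(G)\Pr(H)$ makes $\overline{\cP}$ multiplicatively closed, this one lemma forces $\alpha\in\{\omega^\beta:\beta\geq 0\}$, which simultaneously delivers the lower bound $\alpha\geq\omega$ (from the trivial $\alpha>1$) and eliminates every intermediate value $\omega\cdot k$. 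Your explicit $\omega^k$ constructions and your proposed bootstrapping of a single depth-$2$ occurrence to unbounded depth are both unnecessary here; the paper never attempts such an amplification, and it is not clear your version can be made rigorous, since ``depth'' of the iterated expansion is not obviously well-defined or well-behaved under direct products.

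For the upper bound the paper also proceeds differently. Rather than iterate Theorem~\ref{thm:main} on residuals $p-q$ (which, as you note, are not commuting probabilities) and seek a lexicographic embedding, it extracts from the \emph{proofs} of Theorems~\ref{thm:main} and~\ref{thm:neumann2} a covering statement: for each $\eps_0>0$ there is an integer $k=k(\eps_0)$ and a function $m$ such that, up to a finite set, $\overline{\cP}\cap(\eps_0,1]$ lies in a $k$-fold nested union of sets $\cE_{m(\eps_i)}+[0,\eps_{i+1}]$. The crucial point is that $k$ depends only on $\eps_0$, because it is the bounded number of amplification steps in Theorem~\ref{thm:neumann2}. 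Taking Cantor--Bendixson derivatives and using $\cE_n'=\cE_{n-1}$ then gives $\overline{\cP}^{\,\omega\cdot k}\subset[0,\eps_0]$, hence $\overline{\cP}^{\,\omega^2}\subset\{0\}$, i.e.\ $\alpha\leq\omega^2$. Combined with the multiplicative lemma this yields $\alpha\in\{\omega,\omega^2\}$. Your lexicographic route, even if the expansion could be made canonical and order-preserving, would only reach $\alpha\leq\omega^\omega\cdot\omega=\omega^{\omega+1}$ at the level of CB rank (or order type $\leq\omega^{\omega^2}$ directly), leaving the dichotomy still to be argued.
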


The same theorem holds for $\cP_\textup{b}$.

\section{Neumann's theorem amplified}\label{sec:neumann}

\begin{lemma}
Let $G$ be a finite group and $X$ a symmetric subset of $G$ containing the identity. Then $\langle X\rangle = X^{3r}$ provided $(r+1)|X|>|G|$.
\end{lemma}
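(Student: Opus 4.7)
The plan is to recognise that the statement is about the diameter of the Cayley graph of $H = \langle X\rangle$ with respect to $X$, and to reduce it to showing that this diameter is at most $3r - 1$. Because $X$ is symmetric and contains $e$, the word length $\ell(g) = \min\{k : g \in X^k\}$ is a genuine left-invariant metric on $H$; bounding it by $3r - 1$ will immediately give $X^{3r - 1} = H$ and hence $X^{3r} = \langle X\rangle$.

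I would proceed by contradiction. Assume the diameter is at least $3r$; then there is a geodesic $e = g_0, g_1, \ldots, g_N$ in $H$ with $N \geq 3r$. Consider the $r + 1$ equally spaced vertices $g_0, g_3, g_6, \ldots, g_{3r}$ and their left translates $g_{3i}X \subseteq H$. The key observation is that $g_{3i}X$ and $g_{3j}X$ intersect if and only if $g_{3i}^{-1}g_{3j} \in X \cdot X^{-1} = X^2$, which (by left-invariance of $\ell$) is equivalent to $d(g_{3i}, g_{3j}) \leq 2$. But on a geodesic $d(g_{3i}, g_{3j}) = 3|i - j| \geq 3$ whenever $i \neq j$, so the $r+1$ translates are pairwise disjoint. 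Summing their sizes gives $(r+1)|X| \leq |H| \leq |G|$, contradicting the hypothesis $(r+1)|X| > |G|$.

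The main point of the argument --- the only place where any real idea is needed --- is the spacing: one wants $r+1$ pairwise disjoint $X$-translates inside $H$, and the observation that $X^2 = X \cdot X^{-1}$ controls pairwise overlap makes \emph{every third vertex on a geodesic} exactly the right choice. Once this is in place, the pigeonhole step is routine.
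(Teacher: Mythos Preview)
Your proof is correct and is essentially the paper's argument rephrased in the language of the Cayley-graph metric: the paper picks $x_i\in X^{3i+1}\setminus X^{3i}$ for $i=0,\dots,r$ and shows $x_iX\subset X^{3i+2}\setminus X^{3i-1}$ to get $r+1$ disjoint translates, which is exactly your choice of geodesic points $g_{3i}$ and the observation that $g_{3i}X\cap g_{3j}X\neq\emptyset$ forces $d(g_{3i},g_{3j})\leq 2$. Your version even yields the marginally sharper $\langle X\rangle = X^{3r-1}$, but the idea is identical.
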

\begin{proof}
Suppose $x_i\in X^{3i+1}\setminus X^{3i}$ for each $i=0,\dots,r$. Then for each $i$ we have
\[
	x_i X \subset X^{3i+2} \setminus X^{3i-1},
\]
so $x_0 X,\dots,x_r X$ are disjoint subsets of $G$ each of size $|X|$, so
\[
	(r+1)|X|\leq |G|.
\]
Thus if $(r+1)|X|>|G|$ we must have $X^{3i + 1} = X^{3i}$ for some $i\leq r$, so we must have $\langle X\rangle = X^{3i} = X^{3r}$.
\end{proof}

For $\phi:A\times B\to C$ a bilinear map and $A'\leq A$ and $B'\leq B$ subgroups, we denote by $\phi(A',B')$ the group generated by the values $\phi(a',b')$ with $a'\in A'$, $b'\in B'$.

\begin{theorem}[(Neumann's theorem for bilinear maps)]\label{thm:neumannbilinear}
Let $\eps>0$, and let $\phi:A\times B\to C$ be a bilinear map of finite abelian groups such that $\Pr(\phi)\geq\eps$. Then there are subgroups $A'\leq A$ and $B'\leq B$ such that $|A/A'|$, $|B/B'|$ and $|\phi(A',B')|$ are each $\eps$-bounded.
\end{theorem}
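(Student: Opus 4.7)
The plan is to imitate Neumann's strategy for groups: use Markov's inequality plus the preceding lemma to produce bounded-index subgroups $A'$, $B'$ with controlled rows and columns, then pass to a Heisenberg-type nilpotent group in order to reduce bounding $|\phi(A',B')|$ to the classical BFC (bounded-conjugacy-class) theorem.

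For $a\in A$ write $K_a=\{b\in B:\phi(a,b)=0\}\leq B$, so that $\Pr(\phi)=|A|^{-1}\sum_a|K_a|/|B|\geq\epsilon$. Markov's inequality gives that the set $X:=\{a\in A:|\phi(a,B)|\leq 2/\epsilon\}$ has density at least $\epsilon/2$ in $A$, and since $\phi$ is bilinear $X$ is symmetric and contains $0$; the preceding lemma applied with $r=\lceil 2/\epsilon\rceil$ therefore yields $A':=\langle X\rangle = X^{3r}$ with $[A:A']\leq 2/\epsilon$. Every $a\in A'$ is a sum of at most $3r$ elements of $X$, so $\phi(a,B)$ sits inside a sum of $3r$ subgroups of $C$ each of order at most $2/\epsilon$, giving $|\phi(a,B)|\leq (2/\epsilon)^{3r}$ for all $a\in A'$. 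Dually define $Y\subseteq B$ and $B':=\langle Y\rangle$ to obtain $[B:B']\leq 2/\epsilon$ together with $|\phi(A,b)|\leq (2/\epsilon)^{3r}$ for all $b\in B'$.

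To bound $|\phi(A',B')|$ I would form the Heisenberg-type $2$-step nilpotent group
\[
    G:=(B'\times C)\rtimes A',\qquad (b,c,a)(b',c',a'):=\bigl(b+b',\,c+c'+\phi(a,b'),\,a+a'\bigr),
\]
in which $C$ is central. The commutator identity
\[
    [(b',c',a'),(b,c,a)]=\bigl(0,\,\phi(a',b)-\phi(a,b'),\,0\bigr)
\]
gives $[G,G]=\{0\}\times\phi(A',B')\times\{0\}$ and hence $|[G,G]|=|\phi(A',B')|$. Moreover conjugation of $(b,c,a)$ by $(b',c',a')$ acts only on the central coordinate and sends it to $c+d$ with $d\in\phi(A',b)+\phi(a,B')$, so the conjugacy class of $(b,c,a)$ in $G$ has size at most $|\phi(A',b)|\cdot|\phi(a,B')|\leq (2/\epsilon)^{6r}$ by the row and column bounds above. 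Hence every conjugacy class of $G$ is $\epsilon$-boundedly small, and the classical BFC theorem of B.\,H.~Neumann then supplies an $\epsilon$-bound on $|[G,G]|=|\phi(A',B')|$.

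The only nontrivial ingredient is the BFC theorem, which I treat as a black box; the remaining steps are Markov's inequality, the preceding lemma applied twice, and the Heisenberg identity that converts bilinear row/column bounds into group-theoretic conjugacy-class bounds. The main obstacle is conceptual: recognising that the Heisenberg construction is the right translation device between the bilinear and group settings.
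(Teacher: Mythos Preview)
Your argument is correct, and through the construction of $A'$ and $B'$ it coincides with the paper's. The divergence is in bounding $|\phi(A',B')|$. The paper proceeds elementarily: having shown that every $a\in A'$ has $|\ker\phi(a,\cdot)\cap B'|$ large and symmetrically for $b\in B'$, it observes that for any value $c=\phi(a,b)$ one may perturb first $a$ inside $a+(\ker\phi(\cdot,b)\cap A')$ and then $b$ inside $b+(\ker\phi(a',\cdot)\cap B')$ without changing the value, so every value is hit by at least $(\eps/2)^{12/\eps}|A'||B'|$ pairs, whence $\phi$ takes at most $(2/\eps)^{12/\eps}$ values on $A'\times B'$; finally every element of $\phi(A',B')$ is a sum of \emph{distinct} values (collapse repeated terms via $\phi(a,b)+\phi(a,b)=\phi(2a,b)$), giving $|\phi(A',B')|\leq 2^{(2/\eps)^{12/\eps}}$. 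Your Heisenberg construction is a perfectly valid alternative --- indeed conceptually natural, since it literally turns the bilinear BFC statement into the group-theoretic one --- but it trades an explicit, self-contained bound for an appeal to a black-box theorem whose proof is of comparable depth to what is being established. The paper's acknowledgement (crediting Manners for the ``every value is taken many times'' idea that ``greatly simplified the proof'') suggests that the direct counting was in fact found as an improvement over a heavier argument like yours.
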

\begin{proof}
Let $X\subset A$ be the set of $x\in A$ such that $|\ker\phi(x,\cdot)|\geq (\eps/2)|B|$, and let $A'$ be the group generated by $X$. Then $|X|\geq (\eps/2)|A|$, so $A'$ has index at most $2/\eps$ in $A$, and by the lemma every $a\in A'$ is a sum of at most $6/\eps$ elements of $X$, so for every $a\in A'$ we have $|\ker\phi(a,\cdot)|\geq(\eps/2)^{6/\eps}|B|$. Similarly, there is a subgroup $B'$ of $B$ of index at most $2/\eps$ such that for every $b\in B'$ we have $|\ker\phi(\cdot,b)|\geq(\eps/2)^{6/\eps}|A|$. Then for every $a\in A'$ the subgroup $\ker\phi(a,\cdot)\cap B'$ has index at most $(2/\eps)^{6/\eps}$ in $B'$ and for every $b\in B'$ the subgroup $\ker\phi(\cdot,b)\cap A'$ has index at most $(2/\eps)^{6/\eps}$ in $A'$.

Now consider any value $c$ of $\phi$ on $A'\times B'$, say $c=\phi(a,b)$. If we replace $a$ by any element $a'$ of $a+(\ker\phi(\cdot,b)\cap A')$ and then $b$ by any element $b'$ of $b+(\ker\phi(a',\cdot)\cap B')$ then we still have $\phi(a',b')=c$, so
\[ |\{(a',b')\in A'\times B' : \phi(a',b')=c\}| \geq (\eps/2)^{12/\eps} |A'||B'|,\]
so $\phi$ takes at most $(2/\eps)^{12/\eps}$ different values on $A'\times B'$. But every element of $\phi(A',B')$ is a sum of distinct values of $\phi$ on $A'\times B'$, since if say
\[
	c = \sum_{i=1}^m \phi(a_i,b_i)
\]
with the term $\phi(a_j,b_j)$ appearing twice then we can reduce the total number of terms by replacing $\phi(a_j,b_j)+\phi(a_j,b_j)$ with $\phi(2a_j,b_j)$. Thus $|\phi(A',B')|\leq 2^{(2/\eps)^{12/\eps}}$.
\end{proof}

We need a stronger variant of the above theorem which asserts the existence of subgroups $A'$ and $B'$ such that (1) $\phi(A',B')$ is small and (2) $A'\times B'$ contains almost all pairs $(a,b)\in A\times B$ such that $\phi(a,b)\in \phi(A',B')$, in particular almost all pairs such that $\phi(a,b)=0$. The precise formulation is the following.

\begin{theorem}[(Neumann's theorem for bilinear maps, amplified)]\label{thm:neumannbilinear2}
For every decreasing function $\eta:\N\to(0,1)$ there is some $M=M(\eta)$ such that the following holds. For every bilinear map $\phi:A\times B\to C$ there are subgroups $A'\leq A$ and $B'\leq B$ such that
\begin{enumerate}
	\item $|\phi(A',B')|\leq M$,
	\item\label{bineu2} with at most $\eta(|\phi(A',B')|)|A||B|$ exceptions, every pair $(a,b)\in A\times B$ such that $\phi(a,b)\in \phi(A',B')$ is contained in $A'\times B'$.
\end{enumerate}
\end{theorem}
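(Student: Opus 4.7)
The plan is to prove Theorem~\ref{thm:neumannbilinear2} by iterating the basic version, Theorem~\ref{thm:neumannbilinear}. Starting from the trivial pair $(A_0,B_0)=(\{0\},\{0\})$ with $C_0=\{0\}$, I construct a sequence of Neumann pairs $(A_i,B_i)$, setting $C_i=\phi(A_i,B_i)$, and stop as soon as condition~(\ref{bineu2}) is satisfied for $(A_i,B_i)$ with threshold $\eta(|C_i|)$. I will arrange the iteration so that $C_{i+1}\supsetneq C_i$ at every non-terminating step, whence $|C_{i+1}|\geq 2|C_i|$ and the sequence has length at most $\log_2 M$ whenever $|C_i|\leq M$ throughout; producing the uniform bound $M=M(\eta)$ is the content of the theorem.

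For one iteration step, suppose condition~(\ref{bineu2}) fails for $(A_i,B_i)$, so more than $\eta(|C_i|)|A||B|$ pairs $(a,b)\in(A\times B)\setminus(A_i\times B_i)$ satisfy $\phi(a,b)\in C_i$. Consider the induced bilinear map $\bar\phi:A\times B\to C/C_i$, whose zero set is exactly $\phi^{-1}(C_i)$; adding the exceptional pairs to $A_i\times B_i\subset\bar\phi^{-1}(0)$ gives $\Pr(\bar\phi)\geq\eta(|C_i|)$. Theorem~\ref{thm:neumannbilinear} applied to $\bar\phi$ then yields subgroups $A^*\leq A$ and $B^*\leq B$ with $[A:A^*]$, $[B:B^*]$, and $|\bar\phi(A^*,B^*)|$ all bounded in terms of $\eta(|C_i|)$. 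In particular the subgroup $D_i=C_i+\phi(A^*,B^*)\leq C$ has order bounded in terms of $|C_i|$ and $\eta(|C_i|)$.

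I then take $A_{i+1}=A_i+A^{**}$ and $B_{i+1}=B_i+B^{**}$, where
\[
A^{**}=\{a\in A^*:\phi(a,B_i)\subset C_i\},\quad B^{**}=\{b\in B^*:\phi(A_i,b)\subset C_i\}
\]
are cut down to kill the cross-term contributions. These restrictions have bounded index, since $A^*/A^{**}$ embeds into $\Hom(B_i,C/C_i)$ and likewise for $B^*/B^{**}$, and after the restriction $\phi(A_i,B^{**})\subset C_i$ and $\phi(A^{**},B_i)\subset C_i$. Consequently $C_{i+1}=\phi(A_{i+1},B_{i+1})$ is sandwiched as $C_i\subset C_{i+1}\subset D_i$, so $(A_{i+1},B_{i+1})$ is a Neumann pair with $|C_{i+1}|$ controlled.

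The hard part will be twofold: verifying the strict containment $C_{i+1}\supsetneq C_i$ at every non-terminating stage, and converting the recursion $|C_{i+1}|\leq G(|C_i|,\eta(|C_i|))$ into a uniform bound on $|C_k|$ in terms of $\eta$ alone. For the first: if $\phi(A^{**},B^{**})\subset C_i$ then $A^{**}\times B^{**}\subset\phi^{-1}(C_i)$ with $A^{**}\times B^{**}$ of bounded index in $A\times B$, so a pigeonhole on the cosets of $A^{**}\times B^{**}$ inside $\phi^{-1}(C_i)$ should give a bounded-index refinement of $(A_i,B_i)$ on which condition~(\ref{bineu2}) \emph{does} hold, contradicting the assumed failure at stage $i$. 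For the second, one needs a careful analysis of the recursion combined with the doubling $|C_{i+1}|\geq 2|C_i|$; this is where I anticipate the genuinely delicate work, since naively the iterated bound $G^{(k)}(1)$ grows much faster than $2^k$ and extra monotonicity must be extracted from the decreasing assumption on $\eta$ to close the loop.
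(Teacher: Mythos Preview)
Your proposal has a genuine gap exactly where you flag it: the termination argument. The inequalities $|C_{i+1}|\geq 2|C_i|$ and $|C_{i+1}|\leq G(|C_i|)$ together do \emph{not} bound the number of iterations uniformly in $\eta$; they are perfectly consistent with $|C_i|=G^{(i)}(1)$ for every $i$, and for a given $\phi$ the process is guaranteed to stop only after $\log_2|C|$ steps, which depends on $\phi$. No monotonicity of $\eta$ closes this loop, because nothing in your setup forces condition~(\ref{bineu2}) to kick in before $|C_i|$ approaches $|C|$. There is also a secondary problem: the claim that $A^*/A^{**}$ has bounded index via the embedding into $\Hom(B_i,C/C_i)$ is unjustified, since neither $|B_i|$ nor $|C/C_i|$ is bounded; basic Neumann on $\bar\phi$ controls $\bar\phi(A^*,B^*)$, but you need control of $\bar\phi(A^*,B_i)$ and there is no reason $B_i\subset B^*$.

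The paper's proof uses a different and much simpler descent variable: the index product $|A/A_i|\,|B/B_i|$. One applies Theorem~\ref{thm:neumannbilinear} \emph{once}, with $\eps=\eta(1)$, to obtain $A_1,B_1$ of $\eta(1)$-bounded index with $|C_1|$ bounded. If condition~(\ref{bineu2}) fails, then some single coset $(a+A_1)\times(b+B_1)$ with (say) $a\notin A_1$ carries at least $\eta(|C_1|)|A_1||B_1|$ pairs landing in $C_1$; expanding $\phi(a+a',b+b')$ bilinearly shows that $\phi(a,b')\in C_1$ for an $\eta(|C_1|)$-proportion of $b'\in B_1$, whence $|(C_1+\phi(a,B_1))/C_1|\leq\eta(|C_1|)^{-1}$. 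One then takes $A_2=A_1+\langle a\rangle$, $B_2=B_1$, so $|A/A_2||B/B_2|<|A/A_1||B/B_1|$ strictly. Since this index product starts at an $\eta(1)$-bounded positive integer, the iteration halts after an $\eta(1)$-bounded number of steps, and the bound on the final $|C_k|$ follows by iterating the one-step estimate that many times. The key point you are missing is to enlarge $A_i$ by a \emph{single} offending element rather than by an entire Neumann subgroup, so that strict decrease of the index is automatic.
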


We have not stated a bound on $|A/A'|$ or $|B/B'|$, but such a bound is implicit if $\Pr(\phi)\geq\eps$, since then
\[
	\eps \leq\Pr(\phi)\leq \frac{1}{|A/A'||B/B'|} + \eta(|\phi(A',B')|).
\]
Thus by ensuring $\eta(1)\leq\eps/2$ one automatically has $|A/A'||B/B'|\leq 2\eps^{-1}$.

\begin{proof}
If $\Pr(\phi)\leq\eta(1)$ then we can just take $A'=B'=\{0\}$, so assume otherwise. Then we can apply Theorem~\ref{thm:neumannbilinear} with $\eps=\eta(1)$. Let $A_1\leq A$ and $B_1\leq B$ be the resulting subgroups, let $C_1=\phi(A_1,B_1)$, and suppose that more than $\eta(|C_1|)|A||B|$ pairs $(a,b)\in(A\times B)\setminus(A_1\times B_1)$ satisfy $\phi(a,b)\in C_1$. Then there is some $(a,b)\in(A\times B)\setminus(A_1\times B_1)$, say with $a\notin A_1$, such that at least $\eta(|C_1|)|A_1||B_1|$ pairs $(a',b')\in A_1\times B_1$ satisfy
\[
	\phi(a+a',b+b') \in C_1,
\]
or equivalently
\[
	\phi(a,b) + \phi(a,b') + \phi(a',b) \in C_1.
\]
Then in particular for at least $\eta(|C_1|)|B_1|$ elements $b'\in B_1$ we must have
\[
	\phi(a,b')\in C_1.
\]
But this implies
\[
	|(C_1 + \phi(a,B_1))/C_1|\leq\eta(|C_1|)^{-1},
\]
so if we put $A_2 = A_1+\langle a\rangle$, $B_2=B_1$, $C_2=\phi(A_2,B_2) = C_1 + \phi(a,B_1)$, then $|C_2|\leq\eta(|C_1|)^{-1} |C_1|$, $|B/B_2|\leq |B/B_1|$, and $|A/A_2|<|A/A_1|$. Now we can repeat the argument with $A_2, B_2, C_2$ in place of $A_1, B_1, C_1$, but since $|A/A_1||B/B_1|$ is an $\eta(1)$-bounded integer and $|A/A_2||B/B_2|<|A/A_1||B/B_1|$ this process must end after an $\eta(1)$-bounded number of steps, at which time we will have the conclusion of the theorem.
\end{proof}

We now turn our attention to the commutator map on groups, which behaves enough like a bilinear map for the above arguments to be emulated. In an arbitrary group $G$ we write $[x,y]$ for the commutator $x^{-1}y^{-1}xy$ of two elements $x,y\in G$. We also write $x^y$ for the conjugate $y^{-1}xy$, and we will use the relation $[x,y] = x^{-1}x^y$. For $H,K\leq G$ we write $[H,K]$ for the group generated by all commutators $[h,k]$ with $h\in H, k\in K$.

\begin{theorem}[(Neumann's theorem)]\label{thm:neumann}
Let $\eps>0$, and let $G$ be a finite group such that $\Pr(G)\geq\eps$. Then $G$ has a normal $2$-step nilpotent subgroup $H$ of $\eps$-bounded index such that $|[H,H]|$ is $\eps$-bounded.
\end{theorem}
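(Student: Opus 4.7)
The plan is to mirror the proof of Theorem~\ref{thm:neumannbilinear} as closely as possible, with the commutator $[x,y]$ playing the role of the bilinear form $\phi(a,b)$, and then to append one extra step to refine the first subgroup produced into a 2-step nilpotent one.

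First I would pick out the elements of $G$ with large centralizer. Set $X = \{x \in G : |C_G(x)| \geq (\eps/2)|G|\}$. Since $\Pr(G) = |G|^{-2}\sum_x |C_G(x)| \geq \eps$ and each $|C_G(x)|/|G|$ lies in $[0,1]$, a standard averaging argument gives $|X| \geq (\eps/2)|G|$. The set $X$ is symmetric, contains the identity, and is a union of $G$-conjugacy classes, so the preliminary lemma applied to $X$ yields a subgroup $K := \langle X \rangle = X^{3r}$ for some $r \leq 2/\eps$. In particular every $k \in K$ is a product of at most $6/\eps$ elements of $X$, and since each $C_G(x_i)$ is a subgroup of index at most $2/\eps$ in $G$, the intersection $\bigcap_i C_G(x_i) \subseteq C_G(k)$ has index at most $(2/\eps)^{6/\eps}$. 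Thus $K$ is normal in $G$ of index at most $2/\eps$, and each element of $K$ has at most $n := (2/\eps)^{6/\eps}$ conjugates in $G$ (hence in $K$).

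The next step is to bound $|[K,K]|$. Since $K$ is an $n$-BFC group, B.~H.~Neumann's classical theorem on BFC groups gives $|[K,K]| \leq N$ for some $N = N(\eps)$. A self-contained argument in the spirit of Theorem~\ref{thm:neumannbilinear} would instead try to count commutator values on $K \times K$ using the identities $[k_1 z, k_2] = [k_1, k_2]$ for $z \in Z(K)$ and $[k_1 c, k_2] = c^{-1}[k_1, k_2]c$ for $c \in C_K(k_2)$, which play the role of bilinearity; I expect this to be the main technical obstacle, since the commutator analogue of the $\phi(a,b)+\phi(a,b)=\phi(2a,b)$ collapsing trick only works cleanly in a 2-step nilpotent group, so the bookkeeping requires either going through B.~H.~Neumann's theorem first or carrying out the count up to conjugation.

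Finally, to achieve 2-step nilpotence, I would set $H := C_K([K,K])$. By construction $[H, [K,K]] = 1$, and since $[H,H] \subseteq [K,K]$ this gives $[[H,H], H] = 1$, so $H$ is 2-step nilpotent. Each $g \in [K,K]$ has $K$-centralizer of index at most $n$ in $K$, so $H = \bigcap_{g \in [K,K]} C_K(g)$ has index at most $n^N$ in $K$, hence $\eps$-bounded index in $G$. Clearly $|[H,H]| \leq |[K,K]| \leq N$. Normality of $H$ in $G$ follows because $[K,K]$ is characteristic in the normal subgroup $K$, hence normal in $G$, and the identity $[h^g, y] = [h, y^{g^{-1}}]^g$ shows that $h^g$ centralizes every $y \in [K,K]$ whenever $h \in H$ and $g \in G$. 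This yields the required $H$.
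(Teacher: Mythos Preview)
Your proposal is correct and follows essentially the same route as the paper: define $X$, set $K=\langle X\rangle$, bound centralisers of elements of $K$, bound $|[K,K]|$, and then pass to $H=C_K([K,K])$. The only difference is in the step bounding $|[K,K]|$: you invoke B.~H.~Neumann's BFC theorem directly, whereas the paper first bounds the number of distinct commutators in $K$ via the simple observation that if $c=[x,y]$ then also $c=[x',y']$ for every $x'\in C_K(y)x$ and then every $y'\in C_K(x')y$, and afterwards cites Schur's theorem to pass from boundedly many commutators to bounded $|[K,K]|$. So the commutator-counting you anticipated being the main technical obstacle is in fact a two-line argument, and the collapsing trick from the bilinear case is not needed because the classical Schur/Neumann black box absorbs that work.
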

\begin{proof}
Let $X\subset G$ be the set of all $x\in G$ such that $|C_G(x)|\geq (\eps/2)|G|$, where $C_G(x)$ is the centraliser of $x$ in $G$, and let $K$ be the group generated by $X$. Then $|K|\geq(\eps/2)|G|$, so $K$ has index at most $2/\eps$ in $G$, and by the lemma every $k\in K$ is the product of at most $6/\eps$ elements of $K$, so for every $k\in K$ we have $|C_G(k)|\geq(\eps/2)^{6/\eps}|G|$. Thus also $|C_K(k)|\geq(\eps/2)^{6/\eps}|K|$.

Now consider a commutator $c=[x,y]$ of two elements $x,y\in K$. If we replace $x$ by any element $x'$ of $C_K(y)x$ and then $y$ by any element $y'$ of $C_K(x')y$ then we still have $[x',y']=c$, so
\[ |\{(x',y')\in K^2 : [x',y']=c\}| \geq (\eps/2)^{12/\eps} |K|^2,\]
so there at most $(2/\eps)^{12/\eps}$ distinct commutators of elements of $K$. Now a classical theorem of Schur (see~\cite[10.1.4]{robinson}) implies that $|[K,K]|$ is $\eps$-bounded.

To finish let $H = C_{K}([K,K])$. Then $H$ has $\eps$-bounded index in $K$, hence $\eps$-bounded index in $G$, and since $[H,H]\subset[K,K]$ we see that $H$ is $2$-step nilpotent and $|[H,H]|$ is $\eps$-bounded.
\end{proof}

Now as in the case of bilinear maps we can prove a stronger variant which asserts the existence of a normal subgroup $H$ such that $[H,H]$ is small and such that $H\times H$ contains almost all $(x,y)\in G\times G$ such that $[x,y]\in[H,H]$, in particular almost all commuting pairs. We will need the following generalisation of Schur's theorem due to Baer (see~\cite[14.5.2]{robinson}).

\begin{lemma}\label{lem:baer}
If $M$ and $N$ are normal subgroups of a group $G$ then $|[M,N]|$ is bounded by a function of $|M/C_M(N)|$ and $|N/C_N(M)|$.
\end{lemma}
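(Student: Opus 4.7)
My plan is to mirror the bilinear argument behind Theorem~\ref{thm:neumannbilinear} as closely as possible. Set $k = |M/C_M(N)|\cdot|N/C_N(M)|$. The first step is to observe that $[m,n]$ depends only on the cosets $mC_M(N)$ and $nC_N(M)$: the identity $[xy,z] = [x,z]^y[y,z]$ gives $[cm,n] = [c,n]^m[m,n] = [m,n]$ whenever $c\in C_M(N)$, and the companion identity $[x,yz] = [x,z][x,y]^z$ gives $[m,cn]=[m,n]$ whenever $c\in C_N(M)$. Hence the set of commutators $T := \{[m,n] : m\in M,\, n\in N\}$ has $|T|\leq k$.

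The second step is to bound $|[M,N]|$ by a function of $|T|$. Because $M$ and $N$ are normal in $G$ and $[m,n]^g = [m^g, n^g]$, the set $T$ is closed under $G$-conjugation, and $[M,N] = \langle T\rangle$. So the lemma reduces to a Schur-type assertion: a group generated by a conjugation-closed finite set of commutators has bounded order. The standard proof of this proceeds in two parts. First, the rearrangement identities $[x,y]^z = [x^z, y^z]$ and $[xy,z] = [x,z]^y[y,z]$ are used to express every element of $[M,N]$ as a product of boundedly many commutators from $T$. Second, the order of each commutator in $T$ is bounded by applying Schur's classical theorem to the finitely-generated subgroup $\langle T\rangle$, whose centre has bounded index since each element of $T$ has at most $|T|$ conjugates under it.

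The main obstacle is this second part, which has no direct analogue in the bilinear setting: there the identity $\phi(a,b)+\phi(a,b)=\phi(2a,b)$ handles repetition trivially, while here a repeated commutator $[m,n]^r$ can in principle have large order and must be controlled separately. Since this is exactly the content of a well-known classical theorem of Baer, I would simply cite \cite[14.5.2]{robinson} and present only the first-stage coset observation in the write-up, in keeping with the paper's style of invoking classical results when available.
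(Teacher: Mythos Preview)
The paper gives no proof of this lemma at all: it is simply stated with the parenthetical ``see~\cite[14.5.2]{robinson}'', i.e.\ it is quoted as Baer's theorem. Your proposal is correct and ends at the same citation, so it matches the paper's treatment; the coset observation you add is accurate but redundant, since Robinson~14.5.2 is precisely the full statement of the lemma (bounding $|[M,N]|$ in terms of $|M/C_M(N)|$ and $|N/C_N(M)|$), not merely the Schur-type step you reduce to.
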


\begin{theorem}[(Neumann's theorem, amplified)]\label{thm:neumann2}
For every decreasing function $\eta:\N\to(0,1)$ there is some $M=M(\eta)$ such that the following holds. Every finite group $G$ has a normal subgroup $H$ such that
\begin{enumerate}
	\item $|[H,H]|\leq M$,
	\item with at most $\eta(|[H,H]|)|G|^2$ exceptions, every pair $(x,y)\in G^2$ such that $[x,y]\in[H,H]$ is contained in $H^2$.
\end{enumerate}
\end{theorem}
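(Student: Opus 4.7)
The plan is to mimic the iterative enlargement argument from the proof of Theorem~\ref{thm:neumannbilinear2}, working modulo the current commutator subgroup so that commutator calculus becomes bilinear on the abelianisation. The main technical obstacle, not present in the bilinear setting, is maintaining normality of the enlarged subgroup in $G$ while keeping its commutator subgroup bounded; the plan is to do the enlargement at the level of a bounded quotient and control the commutator via Baer's lemma.

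If $\Pr(G)\le\eta(1)$ I simply take $H=\{1\}$. Otherwise I start by applying Theorem~\ref{thm:neumann} with $\eps=\eta(1)$ to produce a normal subgroup $H_1\lhd G$ of $\eta(1)$-bounded index with $L_1:=[H_1,H_1]$ of bounded order, and then iteratively enlarge. At stage $n$ I have $H_n\lhd G$ with $L_n:=[H_n,H_n]$ of bounded order, and I pass to $\bar G=G/L_n$, $\bar H=H_n/L_n$ (abelian and normal in $\bar G$), and $Q=\bar G/\bar H$, with conjugation action $\rho\colon Q\to\operatorname{Aut}(\bar H)$. Writing each $(\bar x,\bar y)\in\bar G^2$ as $(\bar x_0\bar u,\bar y_0\bar v)$ for fixed coset representatives of some $(q_1,q_2)\in Q^2$ and $\bar u,\bar v\in\bar H$, the condition $[\bar x,\bar y]=1$ becomes the affine equation
\[
	[\bar x_0,\bar y_0]=(\rho(q_1)-1)\bar v+(1-\rho(q_2))\bar u\qquad\text{in }\bar H,
\]
which forces $[q_1,q_2]=1$ in $Q$ and admits at most $|\bar H|\cdot|\ker(\rho(q_1)-1)|$ solutions $(\bar u,\bar v)$ (and symmetrically). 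If the conclusion fails for $H_n$, then pigeonholing the more than $\eta(|L_n|)|\bar G|^2$ bad pairs over the at most $|Q|^2$ nontrivial coset pairs will produce some $q^\ast\in Q\setminus\{1\}$ admitting a lift $\bar x_0\in\bar G$ with $|C_{\bar H}(\bar x_0)|\ge\eta(|L_n|)|\bar H|$.

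I then enlarge by setting $Q_1=\langle(q^\ast)^Q\rangle\lhd Q$ and taking $H_{n+1}\lhd G$ to be the preimage of $Q_1$ under $G\to Q$. Then $H_{n+1}\supsetneq H_n$ and $[G:H_{n+1}]<[G:H_n]$, and to bound $|L_{n+1}|$ it suffices to bound $|[\bar H_{n+1},\bar H_{n+1}]|$ where $\bar H_{n+1}=H_{n+1}/L_n$. Since each of the at most $|Q|$ conjugates of $C_{\bar H}(\bar x_0)$ is its image under some $\rho(g)\in\operatorname{Aut}(\bar H)$ and therefore has the same index $\le\eta(|L_n|)^{-1}$ in $\bar H$, their intersection $C_{\bar H}(Q_1)$ has index at most $\eta(|L_n|)^{-|Q|}$ in $\bar H$. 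Since $\bar H$ is abelian, $C_{\bar H}(Q_1)\le Z(\bar H_{n+1})$, so $|\bar H_{n+1}/Z(\bar H_{n+1})|$ is bounded, and Schur's theorem (the diagonal case of Baer's Lemma~\ref{lem:baer}) then gives a bound on $|[\bar H_{n+1},\bar H_{n+1}]|$.

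Since $[G:H_n]$ is a strictly decreasing positive integer starting from the Neumann bound $\le2/\eta(1)$, the iteration must terminate after boundedly many steps, at which point the conclusion holds (trivially so if it reaches $H_n=G$, since the accumulated bound on $|L_n|$ then bounds $|[G,G]|$). The hard part is indeed the enlargement step: the naive choice $\langle H_n,\bar x_0\rangle$ need not be normal in $G$, and taking its full normal closure could enlarge the commutator subgroup uncontrollably. The whole reason the plan works is that $Q$ is \emph{bounded} and acts on the abelian $\bar H$ through automorphisms, so closing up only at the level of $Q$ produces an enlargement that Baer's lemma can control.
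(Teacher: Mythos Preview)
Your proof is correct and follows the same iterative-enlargement scheme as the paper, but the execution differs in a couple of pleasant ways. The paper maintains a \emph{pair} of normal subgroups $K_i,L_i$ (initially $K_1=L_1$ the Neumann subgroup), enlarges whichever side the offending element falls outside of, bounds $|[K_{i+1},L_i]|$ at each step via Baer's Lemma~\ref{lem:baer} in its full asymmetric form, and only at the very end sets $H=K\cap L$. You instead keep a single chain $H_1\subsetneq H_2\subsetneq\cdots$, pass to the quotient $\bar G=G/[H_n,H_n]$ so that $\bar H_n$ becomes genuinely abelian and the bad-pair condition becomes an affine equation over the $Q$-module $\bar H_n$, and enlarge by pulling back the normal closure of $q^\ast$ in the bounded quotient $Q$. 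Because $C_{\bar H}(Q_1)$ is then central in $\bar H_{n+1}$ of bounded index, you need only Schur's theorem (the diagonal case of Lemma~\ref{lem:baer}) rather than the full Baer lemma. Your route makes the analogy with the bilinear Theorem~\ref{thm:neumannbilinear2} literal---you have genuinely reduced to a module-theoretic count---whereas the paper's route stays inside $G$ and relies on the commutator expansion~\eqref{commutatorexpansion} directly; each buys a small simplification at the cost of the other.
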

\begin{proof}
If $\Pr(G)\leq\eta(1)$ then we can just take $H=1$, so assume otherwise. Then we can apply Theorem~\ref{thm:neumann} with $\eps=\eta(1)$. Let $K_1\leq G$ be the resulting subgroup, let $L_1=K_1$, and suppose that more than $\eta(|[K_1,L_1]|)/2\cdot|G|^2$ pairs $(x,y)\in G^2\setminus(K_1\times L_1)$ satisfy $[x,y]\in [K_1,L_1]$. Then there must be some $(x,y)\in G^2\setminus(K_1\times L_1)$, say with $x\notin K_1$, such that at least $\eta(|[K_1,L_1]|)/2\cdot |K_1||L_1|$ pairs $(k,l)\in K_1\times L_1$ satisfy
\[
	[xk,yl]\in[K_1,L_1].
\]
By using the commutator expansion formula
\begin{equation}\label{commutatorexpansion}
 [ab,cd] = [a,d]^b [b,d] [a,c]^{bd} [b,c]^d
\end{equation}
and some further rearrangement, we can rewrite this as
\[
	[x,l^{-1}]^{-1} [x,y] [k^{-1},y]^{-1} \in [K_1,L_1].
\]
This implies that for some $l_0\in L_1$ there are at least $\eta(|[K_1,L_1]|)/2\cdot |L_1|$ elements $l\in L_1$ such that
\[
	[x,l^{-1}]^{-1} [x,l_0^{-1}] \in [K_1,L_1],
\]
so for these $l$ we have
\[
	[x,l_0^{-1} l] = ([x,l^{-1}]^{-1} [x,l_0^{-1}])^l \in [K_1,L_1].
\]

Thus the subgroup $N_0\leq L_1$ defined by
\[
	N_0 = \{l\in L_1 : [x,l]\in[K_1,L_1]\}
\]
has index at most $2\eta(|[K_1,L_1]|)^{-1}$ in $L_1$, thus index at most
\[
	2\eta(|[K_1,L_1]|)^{-1} |G/L_1|
\]
in $G$. If $N$ is the largest normal subgroup of $G$ contained in $N_0$ then it follows that
\[
	|G/N|\leq (2\eta(|[K_1,L_1]|)^{-1}|G/L_1|)!.
\]
But note that if $K_2$ is the normal subgroup of $G$ generated by $K_1$ and $x$ then in fact
\[
	N = \{l\in L_1: [K_2,l]\subset[K_1,L_1]\},
\]
so
\[
	N/[K_1,L_1] = C_{L_1/[K_1,L_1]}(K_2/[K_1,L_1]).
\]
Since trivially
\[
	K_1/[K_1,L_1] \leq C_{K_2/[K_1,L_1]}(L_1/[K_1,L_1]),
\]
Lemma~\ref{lem:baer} implies that the size of
\[
	[K_2/[K_1,L_1], L_1/[K_1,L_1]] = [K_2,L_1]/[K_1,L_1]
\]
is bounded by a function of $|L_1/N| \leq |G/N|$ and $|K_2/K_1|\leq |G/K_1|$, and thus the size of $[K_2,L_1]$ is bounded by a function of $\eta(|[K_1,L_1]|)$.

Now we can repeat the argument with $K_2$ and $L_2=L_1$ in place of $K_1$ and $L_1$, but since $|G/K_1||G/L_1|$ is an $\eta(1)$-bounded integer and $|G/K_2||G/L_2|<|G/K_1||G/L_1|$ this process must end after an $\eta(1)$-bounded number of steps, at which time we will have normal subgroups $K,L\leq G$ such that
\begin{enumerate}
	\item $|[K,L]|\leq M$,
	\item\label{point2etaKL} with at most $\eta(|[K,L]|)/2\cdot |G|^2$ exceptions, every pair $(x,y)\in G^2$ such that $[x,y]\in[K,L]$ is contained in $K\times L$.
\end{enumerate}
But \ref{point2etaKL} implies that with at most $\eta(|[K,L]|)|G|^2$ exceptions every pair $(x,y)\in G^2$ such that $[x,y]\in[K,L]$ is contained in both $K\times L$ and $L\times K$, and hence in $(K\cap L)^2$, so because
\[
	[K\cap L,K\cap L]\subset[K,L]
\]
the conclusion of the theorem is satisfied by $H=K\cap L$.
\end{proof}

We pause to mention that Theorem~\ref{thm:neumann2} admits a rather clean formulation in terms of ultrafinite groups. Given a sequence of finite groups $(G_n)$ and a nonprincipal ultrafilter $p\in\beta\N\setminus\N$, the set $\prod_{n\to p} G_n$ of all sequences $(g_n)\in \prod G_n$ defined up to $p$-almost-everywhere equality forms a group, which we refer to as an \emph{ultrafinite group}. The properties of $G = \prod_{n\to p} G_n$ tend to reflect the asymptotic properties of $(G_n)$. A subset $S$ of $G$ is called \emph{internal} if it is defined by subsets $S_n\subset G_n$ in the same way, namely if $(s_n)\in S$ if and only if $s_n\in S_n$ for $p$-almost-all $n$, in which case we write $S=\prod_{n\to p} S_n$. Internal subsets can be measured by assigning to $S = \prod_{n\to p} S_n$ the standard part of the ultralimit of $|S_n|/|G_n|$ as $n\to p$. The resulting premeasure extends to a countably additive measure, called \emph{Loeb measure}, on the $\sigma$-algebra generated by the internal subsets. In this language Theorem~\ref{thm:neumann2} can be stated as follows.

\begin{theorem}[(Neumann's theorem, amplified, ultrafinitary version)]\label{thm:neumann2ultra}
Every ultrafinite group $G$ has an internal normal subgroup $H$ such that $[H,H]$ is finite and such that almost every pair $(x,y)\in G^2$ such that $[x,y]\in [H,H]$ is contained in $H^2$.
\end{theorem}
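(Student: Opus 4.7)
The strategy is to transcribe the iterative proof of Theorem~\ref{thm:neumann2} into the ultrafinite setting, replacing finitary ratios by Loeb measures. If the commuting set in $G^2$ has Loeb measure zero, take $H=1$: the exception set is contained in the commuting set and so has Loeb measure zero. Otherwise this measure is some $\alpha>0$, so $\Pr(G_n)\geq\alpha/2$ for $p$-almost-all $n$; applying the finitary Theorem~\ref{thm:neumann} to each such $G_n$ and taking the ultraproduct produces an internal normal subgroup $K_1\leq G$ with $|G/K_1|$ a standard positive integer $I$ and $[K_1,K_1]$ standard finite. Set $L_1=K_1$.

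Now iterate as in the proof of Theorem~\ref{thm:neumann2}. At stage $j$ we have internal normal subgroups $K_j,L_j\leq G$ with $|G/K_j|$, $|G/L_j|$ standard and $[K_j,L_j]$ finite. If the internal exception set
\[
E_j = \{(x,y)\in G^2 : [x,y]\in[K_j,L_j]\}\setminus (K_j\times L_j)
\]
has Loeb measure zero we stop; otherwise the pointwise pigeonhole argument from Theorem~\ref{thm:neumann2}'s proof, applied in each $G_n$ and then ultraproducted, yields an internal element $x\in G\setminus K_j$ (or symmetrically $y\in G\setminus L_j$) such that taking $K_{j+1}$ to be the normal closure of $K_j\cup\{x\}$ in $G$ and $L_{j+1}=L_j$ keeps $[K_{j+1},L_{j+1}]$ finite, its size being controlled by Lemma~\ref{lem:baer}. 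Since $|G/K_j|\cdot|G/L_j|$ is a positive integer bounded by $I^2$ that strictly decreases with each iteration, the process terminates after a standard-finite number of steps; setting $H=K\cap L$ for the final $K,L$ then yields an internal normal subgroup with $[H,H]\subseteq[K,L]$ finite, and the symmetry argument at the end of Theorem~\ref{thm:neumann2}'s proof shows that almost every pair $(x,y)\in G^2$ with $[x,y]\in[H,H]$ lies in $H^2$.

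The main obstacle is the pigeonhole step: extracting an internal witness $x$ from the assertion that $E_j$ has positive Loeb measure. Since $E_j$ is itself internal, its density in $G_n^2$ is bounded below by a fixed positive constant for $p$-almost-all $n$, so the finitary pigeonhole applied in each such $G_n$ produces a coherent sequence of witnesses whose ultraproduct is the required internal $x$; the same remark justifies transferring the commutator-expansion calculation \eqref{commutatorexpansion} used in Theorem~\ref{thm:neumann2}'s proof, as it is a pointwise identity that holds in every $G_n$.
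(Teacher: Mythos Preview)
Your proof is correct, but it takes a different route from the paper's. The paper does not re-run the iteration inside the ultraproduct; instead it deduces Theorem~\ref{thm:neumann2ultra} from the already-proved finitary Theorem~\ref{thm:neumann2} by a short contradiction argument. Assuming some ultrafinite $G$ fails the conclusion, one extracts for each $M$ a positive lower bound $\eta(M)$ on the Loeb measure of bad pairs among internal normal subgroups with at most $M$ commutators (by a diagonal argument), applies Theorem~\ref{thm:neumann2} to each coordinate $G_n$ with the function $\eta/2$, and takes the ultraproduct of the resulting $H_n$ to get an internal normal $H$ with boundedly many commutators and bad-pair measure at most $\eta(M)/2$, contradicting the definition of $\eta$. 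Schur's theorem then upgrades ``finitely many commutators'' to ``finite $[H,H]$''.

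The trade-off: the paper's argument is shorter and treats Theorem~\ref{thm:neumann2} as a black box, illustrating the general transfer principle between uniform finitary statements and ultrafinite ones. Your argument is more self-contained and shows that the iteration itself lives naturally in the ultraproduct (the termination is cleaner, since one simply watches a standard integer $|G/K_j|\cdot|G/L_j|$ decrease), but at the cost of essentially reproving Theorem~\ref{thm:neumann2}. One point worth making explicit in your write-up is why the Baer bound is standard-finite at each step: the subgroup $N_0=\{l\in L_j:[x,l]\in[K_j,L_j]\}$ has standard-finite index in $L_j$ because the positive Loeb measure of $E_j$ gives a uniform density lower bound in the coordinates, and the normal core $N$ then has index at most $|G/N_0|!$, so Lemma~\ref{lem:baer} applies with standard inputs.
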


Given a subgroup $H\leq G$, let us temporarily refer to pairs $(x,y)\in G^2\setminus H^2$ such that $[x,y]\in[H,H]$ as \emph{bad pairs}. Then the theorem states that every ultrafinite group $G$ has an internal normal subgroup $H$ with finite commutator subgroup and almost no bad pairs.

\begin{proof}[that Theorem~\ref{thm:neumann2} implies Theorem~\ref{thm:neumann2ultra}] 
Suppose $G$ were an ultrafinite group such that every internal normal subgroup $H$ with finitely many commutators has a positive measure set of bad pairs.

Note for every $M$ there is some $\eta(M)>0$ such that if $H\leq G$ is an internal normal subgroup with at most $M$ distinct commutators then the set of bad pairs for $H$ has measure at least $\eta(M)$. Indeed if not then for every $k$ there is an internal normal subgroup $\prod_{n\to p} H_{n,k}$ with at most $M$ distinct commutators and at most a measure $1/k$ set of bad pairs, so the internal normal subgroup $H=\prod_{n\to p} H_{n,n}$ has at most $M$ distinct commutators and almost no bad pairs, contradicting our hypothesis about $G$.

Applying Theorem~\ref{thm:neumann2} then to $G_n$ and $\eta/2$, we find normal subgroups $H_n\leq G_n$ with bounded-size commutator subgroups, say $|[H_n,H_n]|=M$ for $p$-almost-all $n$, such that $H_n$ has at most $(\eta(|[H_n,H_n]|)/2)|G_n|^2$ bad pairs. But then $H=\prod_{n\to p} H_n$ has at most $M$ commutators and at most a measure $\eta(M)/2$ set of bad pairs, a contradiction.

Thus for every ultrafinite group $G$ there is an internal normal subgroup $H$ with finitely many commutators and almost no bad pairs. By Schur's theorem $[H,H]$ is also finite.
\end{proof}

\begin{proof}[that Theorem~\ref{thm:neumann2ultra} implies Theorem~\ref{thm:neumann2}]
If Theorem~\ref{thm:neumann2} failed then we would have some decreasing function $\eta:\N\to(0,1)$ and for every $n$ some finite group $G_n$ such that every normal subgroup $H_n\leq G_n$ with $|[H_n,H_n]|\leq n$ has at least $\eta(|[H_n,H_n]|)|G_n|^2$ bad pairs. Let $G=\prod_{n\to p} G_n$. By Theorem~\ref{thm:neumann2ultra} there is an internal normal subgroup $H=\prod_{n\to p} H_n$ of $G$ with $[H,H]$ finite and almost no bad pairs. But then for $p$-almost-all $n$ the group $H_n$ has $|[H_n,H_n]|\leq|[H,H]|\leq n$ and fewer than $\eta(|[H,H]|)|G_n|^2$ bad pairs, a contradiction.
\end{proof}

\section{The main theorem}\label{sec:main}

For an abelian group $A$ we denote by $\widehat{A}$ the group of characters $\gamma:A\to S^1$. Recall the size relation $|\widehat{A}|=|A|$ and the orthogonality relations
\begin{align*}
	\E_{a\in A} \gamma(a) &= 1_{\gamma=1},\\
	\E_{\gamma \in \widehat{A}} \gamma(a) &= 1_{a=0}.
\end{align*}

\begin{lemma}\label{lem:bilinear}
Let $A$, $B$, $C$ be finite abelian groups and $\phi:A\times B\to C$ a bilinear map. Then $\cE(\Pr(\phi))\leq |C|$.
\end{lemma}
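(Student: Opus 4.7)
The plan is to use Fourier analysis on $C$ to write $\Pr(\phi)$ as an average over characters, and then observe that each resulting average is itself a reciprocal of a positive integer. Specifically, by the orthogonality relation $1_{c=0} = \E_{\gamma\in\widehat{C}}\gamma(c)$,
\[
	\Pr(\phi) = \E_{a,b} 1_{\phi(a,b)=0} = \E_{\gamma\in\widehat{C}} \E_{a\in A}\E_{b\in B} \gamma(\phi(a,b)).
\]

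Next I would unpack the inner expectation. For fixed $\gamma$ and $a$, the function $b\mapsto\gamma(\phi(a,b))$ is a character of $B$, and by orthogonality its average over $b$ is $1$ if this character is trivial and $0$ otherwise. The character is trivial precisely when $\phi(a,B)\subset\ker\gamma$, so if we set
\[
	A_\gamma = \{a\in A : \phi(a,B)\subset\ker\gamma\},
\]
which is a subgroup of $A$, then $\E_b\gamma(\phi(a,b)) = 1_{a\in A_\gamma}$ and hence $\E_a\E_b\gamma(\phi(a,b)) = |A_\gamma|/|A| = 1/[A:A_\gamma]$. Combining with $|\widehat{C}|=|C|$ gives
\[
	\Pr(\phi) = \frac{1}{|C|}\sum_{\gamma\in\widehat{C}}\frac{1}{[A:A_\gamma]} = \sum_{\gamma\in\widehat{C}}\frac{1}{|C|\,[A:A_\gamma]}.
\]

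This is an explicit representation of $\Pr(\phi)$ as a sum of $|C|$ unit fractions, so $\cE(\Pr(\phi))\leq|C|$. There is no real obstacle here: the proof is essentially a direct character-theoretic calculation, and the only thing to notice is the fortunate fact that applying orthogonality on $B$ (instead of on $C$ a second time) collapses each Fourier coefficient into a subgroup density and hence into a reciprocal integer.
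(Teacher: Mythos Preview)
Your proof is correct and is essentially identical to the paper's own argument: both expand $1_{\phi(a,b)=0}$ via characters of $C$, then use that $b\mapsto\gamma(\phi(a,b))$ is a character of $B$ to collapse the inner average to an indicator, and finally observe that the resulting set $A_\gamma=\{a\in A:\gamma(\phi(a,B))=1\}$ is a subgroup of $A$, giving $|C|$ unit fractions. The only cosmetic difference is that you made the final Egyptian-fraction decomposition $\sum_\gamma 1/(|C|\,[A:A_\gamma])$ explicit.
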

\begin{proof}
By orthogonality of characters we have
\begin{align*}
	\Pr(\phi)
	&= \E_{a\in A} \E_{b\in B} 1_{\phi(a,b)=0}\\
	&= \E_{a\in A} \E_{b\in B} \E_{\gamma\in\widehat{C}} \gamma(\phi(a,b))\\
	&= \E_{a\in A} \E_{\gamma\in\widehat{C}} 1_{\gamma(\phi(a,B)) = 1}\\
	&= \E_{\gamma\in\widehat{C}} \left(\frac{1}{|A|} |\{a\in A: \gamma(\phi(a,B)) = 1\}|\right).
\end{align*}
But for fixed $\gamma\in\widehat{C}$ the set $\{a\in A: \gamma(\phi(a,B))=1\}$ is a subgroup of $A$, so the above formula expresses $\Pr(\phi)$ as a sum of $|C|$ terms of the form $1/n$ with $n$ a positive integer.
\end{proof}

\begin{proof}[of Theorem~\ref{thm:mainbilinear}]
Fix $\eta:\N\to(0,1)$ and $\phi:A\times B\to C$. Applying Theorem~\ref{thm:neumannbilinear2}, we find some $M=M(\eta)$ and subgroups $A'\leq A$ and $B'\leq B$ such that $|\phi(A',B')|\leq M$ and such that no more than $\eta(|\phi(A',B')|)|A||B|$ pairs $(a,b)\in (A\times B)\setminus(A'\times B')$ satisfy $\phi(a,b)=0$. Thus
\[
	\Pr(\phi) = \frac{1}{|A/A'||B/B'|}\Pr(\phi_{A'\times B'})+\eps,
\]
where
\[
	\cE(\Pr(\phi_{A'\times B'}))\leq|\phi(A',B')|\leq M
\]
by the lemma, and
\begin{align*}
	\eps 
	&= \frac{|\{(a,b)\in (A\times B)\setminus(A'\times B') : \phi(a,b)=0\}|}{|A||B|}\\
	&\leq \eta(|\phi(A',B')|)\\
	&\leq \eta(\cE(\Pr(\phi_{A'\times B'}))).
\end{align*}
\end{proof}

The proof of Theorem~\ref{thm:main} is similar, but to prove a suitable analogue of Lemma~\ref{lem:bilinear} we need the following theorem of Hall~\cite{phall}.

\begin{lemma}\label{lem:hall}
In any group $G$ the index of the second centre
\[
	Z_2(G) = \{g\in G: [g,G]\subset Z(G)\}
\]
is bounded by a function of $|[G,G]|$.
\end{lemma}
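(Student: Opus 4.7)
The plan is to bound $|G/Z_2(G)|$ as a function of $n := |[G,G]|$ by combining the conjugation action of $G$ on $[G,G]$ with an iterated use of Lemma~\ref{lem:baer}. First, the conjugation action of $G$ on the finite normal subgroup $[G,G]$ gives a homomorphism $G\to\operatorname{Aut}([G,G])$ whose kernel $K := C_G([G,G])$ has index at most $|\operatorname{Aut}([G,G])|\leq n!$ in $G$, an $n$-bounded quantity. This is the basic device: it replaces the structure of $G$, modulo a bounded-index subgroup, with the structure of $K$, which centralises $[G,G]$ and is therefore $2$-step nilpotent in a weak sense (its commutator subgroup lies in $K\cap[G,G]=Z([G,G])$).

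Second, I would feed this into Lemma~\ref{lem:baer} with $M=[G,G]$ and $N=G$: the inputs $|M/C_M(N)|\leq|M|=n$ and $|N/C_N(M)|=|G/K|\leq n!$ are both $n$-bounded, so $|\gamma_3(G)|=|[[G,G],G]|$ is $n$-bounded. Iterating with $M=\gamma_i(G)$, $N=G$ bounds every $|\gamma_i(G)|$ in terms of $n$; since the $\gamma_i(G)$ form a decreasing chain in the finite group $[G,G]$ of order $n$, the chain must stabilise in at most $n$ steps. In particular, passing to the quotient $\bar G := G/\gamma_3(G)$ produces a $2$-step nilpotent group in which $\bar G':=[G,G]/\gamma_3(G)$ is central and of $n$-bounded order.

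Third, the crux is to convert these lower central bounds into a bound on $|G/Z_2(G)|$ rather than a bound on $|G/Z_2(\bar G)|$ (the latter is trivially $1$). The idea is to inspect the commutator pairing $\bar G\times\bar G\to\bar G'$, which is biadditive and takes values in a group of $n$-bounded size. Inside $G$ this picks out, via a short iteration of the Lemma~\ref{lem:neumann}-type argument (centralisers of $[G,G]$, of $\gamma_3(G)$, and of the various bounded normal subgroups produced along the way), a finite-index subgroup $L\leq G$ of $n$-bounded index whose commutators with $G$ all land in some fixed normal subgroup $V\leq[G,G]$ of $n$-bounded order. A further intersection with $C_G(V)$ then forces $[L',G]\subset Z(G)$, so that $L'$ — still of $n$-bounded index — is contained in $Z_2(G)$. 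The main obstacle is the last step: having $[g,G]\subset\gamma_3(G)$ does not automatically give $[g,G]\subset Z(G)$, and one must run one more application of Lemma~\ref{lem:baer} together with several commutator identities (of the sort used in the proof of Theorem~\ref{thm:neumann2}) to pass from ``commutators in a fixed bounded subgroup'' to ``commutators in the centre''. Once that is arranged the conclusion is immediate.
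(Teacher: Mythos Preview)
The paper does not prove this lemma: it is stated as a classical theorem of P.~Hall and simply cited. So there is no argument in the paper to compare against.

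Your plan has a genuine gap. Step~1 is correct, but Step~2 and its iteration are vacuous: every $\gamma_i(G)$ for $i\geq 2$ is already contained in $G'$, hence has order at most $n$ without any appeal to Lemma~\ref{lem:baer}. All the content must therefore lie in Step~3, and there the argument breaks down. You propose to find a bounded-index $L\leq G$ with $[L,G]$ contained in some normal $V\leq G'$ of $n$-bounded order---but since $|G'|=n$, this is tautological with $L=G$ and $V=G'$. Passing to $L\cap C_G(V)$ then yields only $[L\cap C_G(V),\,V]=1$, which is far weaker than the required $[L\cap C_G(V),\,G]\subset Z(G)$; for the latter you would need $V\subset Z(G)$, and that is exactly the unproved assertion. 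You flag this as ``the main obstacle'' and suggest ``one more application of Lemma~\ref{lem:baer}'', but Baer's lemma (like Schur's theorem, which it generalises) runs in the wrong direction: it bounds the size of a commutator subgroup from centralizer-index data, whereas Hall's theorem asks one to bound an index from $|G'|$. Iterating Baer alone cannot reverse that arrow.

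What is actually missing is a device that converts the finiteness of $G'$ into homomorphisms \emph{out of} a bounded-index subgroup of $G$ into bounded-size targets, with common kernel contained in $Z_2(G)$. In Hall's argument this comes from the observation that for $k\in K=C_G(G')$ the commutator maps $x\mapsto[k,x]$ become genuine homomorphisms on $K$ (because $K$ centralises every commutator), which one then assembles over a transversal of $K$ in $G$. Your sketch never produces such a map, and without it the passage from ``commutators lie in a small subgroup'' to ``commutators lie in the centre'' remains unbridged.
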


\begin{lemma}
Let $G$ be a finite group. Then $\cE(\Pr(G))\leq|G/Z_2(G)|\cdot|[G,G]|$. In particular by Hall's theorem $\cE(\Pr(G))$ is bounded by a function of $|[G,G]|$.
\end{lemma}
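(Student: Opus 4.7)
My plan is to mimic Lemma~\ref{lem:bilinear} but, since the commutator map is not bilinear on $G\times G$, to first decompose $\Pr(G)$ into an average over cosets of $Z_2=Z_2(G)$. Writing $x=x_0z$ with $x_0$ running over coset representatives and $z\in Z_2$,
\[
\Pr(G)=\frac{1}{|G/Z_2|}\sum_{x_0\in G/Z_2}\Pr_{z\in Z_2,\,y\in G}([x_0 z,y]=1),
\]
so it suffices to express each summand as a sum of at most $|[G,G]|$ unit fractions; combining over the $|G/Z_2|$ cosets and absorbing the overall factor $1/|G/Z_2|$ then yields the Egyptian decomposition.

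Fix $x_0$. Expanding $[x_0z,y]$ via $[ab,c]=[a,c]^b[b,c]$ and $[a,bc]=[a,c][a,b]^c$, and using that $z\in Z_2$ forces both $[z,y]$ and $[[x_0,y],z]$ into $Z=Z(G)$, one obtains
\[
[x_0z,y]=[x_0,y]\cdot[z,y]\cdot c(z,y)
\]
with $c(z,y)\in Z$. In particular $[x_0z,y]=1$ forces $[x_0,y]\in Z$, so I would restrict $y$ to the subgroup $H_{x_0}=\{y\in G:[x_0,y]\in Z\}$ (which contains $Z_2$), paying a factor $1/[G:H_{x_0}]$. On $Z_2\times H_{x_0}$ the correction $c(z,y)$ vanishes, and the condition reduces to
\[
B(z,y)+\ell(y)=0
\]
in the finite abelian group $Z\cap[G,G]$, where $B(z,y)=[z,y]$ and $\ell(y)=[x_0,y]$. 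The structural point is that $B$ is genuinely bilinear and $\ell$ is a homomorphism: the relevant commutators all land in $Z$, on which conjugation acts trivially, so the non-central conjugation corrections that would usually obstruct bilinearity vanish.

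At this point I would expand the indicator by characters of $Z\cap[G,G]$ exactly as in Lemma~\ref{lem:bilinear}. For each $\gamma\in\widehat{Z\cap[G,G]}$ the inner expectation over $z$ picks out the subgroup $K_\gamma=\{y\in H_{x_0}:\gamma([z,y])=1\text{ for all }z\in Z_2\}$, and the outer expectation over $y$ is then either $0$ or $|K_\gamma|/|H_{x_0}|$ according to whether $\gamma\circ\ell$ is nontrivial or trivial on $K_\gamma$. Each $\gamma$ thus contributes either zero or a unit fraction $1/(|Z\cap[G,G]|\cdot[H_{x_0}:K_\gamma])$, so $\Pr_{z\in Z_2,\,y\in H_{x_0}}(B+\ell=0)$ is a sum of at most $|Z\cap[G,G]|\le|[G,G]|$ reciprocals. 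Summing over $x_0$ with the extra factor $1/(|G/Z_2|[G:H_{x_0}])$ yields $\cE(\Pr(G))\leq|G/Z_2|\cdot|[G,G]|$, and the ``in particular'' clause follows from Lemma~\ref{lem:hall}. I expect the main friction to be the commutator bookkeeping in step two: once one commits to $z\in Z_2$ and $y\in H_{x_0}$ every non-central correction collapses and the Fourier step becomes a verbatim copy of Lemma~\ref{lem:bilinear}, but verifying the collapse cleanly requires careful use of $[G,Z_2]\subset Z$.
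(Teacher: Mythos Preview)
Your argument is correct and follows essentially the same route as the paper's proof: both introduce a free $Z_2$-variable so that the commutator splits as a bilinear map plus a homomorphism into $Z\cap[G,G]$, then expand by characters of that abelian group to obtain one unit fraction per character per $Z_2$-coset. The only cosmetic differences are that you write $x=x_0z$ whereas the paper shifts $y\mapsto yz$, and you first pass to the subgroup $H_{x_0}=\{y:[x_0,y]\in Z\}$ before the Fourier step, while the paper packages this restriction together with the character condition into a single subgroup $G_{y,\gamma}=\{x:[x,y]\in A,\ \gamma([x,Z_2])=1\}$.
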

\begin{proof}
Let $A$ be the abelian group $[G,G]\cap Z(G)$, and let $Z_2$ be the second centre of $G$. Then by the orthognality relations we have
\begin{align*}
	\Pr(G)
	&= \E_{x\in G} \E_{y\in G} 1_{[x,y]=1}\\
	&= \E_{x\in G} \E_{y\in G} \E_{z\in Z_2} 1_{[x,yz]=1}\\
	&= \E_{x\in G} \E_{y\in G} \E_{z\in Z_2} \E_{\gamma\in\widehat{A}} 1_{[x,yz]\in A} \gamma([x,yz])\\
	&= \E_{x\in G} \E_{y\in G} \E_{z\in Z_2} \E_{\gamma\in\widehat{A}} 1_{[x,y]\in A} \gamma([x,yz]),
\end{align*}
since, by \eqref{commutatorexpansion}, $[x,yz]=[x,z][x,y]^z\in A$ if and only if $[x,y]\in A$. Moreover, if $[x,y]\in A$ then $[x,yz]=[x,z][x,y]$, so by orthogonality again we have
\begin{align*}
	\Pr(G) 
	&= \E_{x\in G} \E_{y\in G} \E_{z\in Z_2} \E_{\gamma\in\widehat{A}} 1_{[x,y]\in A} \gamma([x,z])\gamma([x,y]),\\
	&= \E_{x\in G} \E_{y\in G} \E_{\gamma\in\widehat{A}} 1_{[x,y]\in A} \left(\E_{z\in Z_2} \gamma([x,z])\right) \gamma([x,y]),\\
	&= \E_{x\in G} \E_{y\in G} \E_{\gamma\in\widehat{A}} 1_{[x,y]\in A} 1_{\gamma([x,Z_2])=1} \gamma([x,y]).
\end{align*}
For fixed $y\in G$, $\gamma\in\widehat{A}$, let
\[
	G_{y,\gamma} = \{x\in G: [x,y]\in A, \gamma([x,Z_2])=1\}.
\]
Then, again by \eqref{commutatorexpansion}, $G_{y,\gamma}$ is a subgroup of $G$ and $x\mapsto[x,y]$ defines a homomorphism $G_{y,\gamma}\to A$, so
\[
	\Pr(G) = \E_{y\in G} \E_{\gamma\in\widehat{A}} \frac{1}{|G/G_{y,\gamma}|} 1_{\gamma([G_{y,\gamma},y])=1}.
\]
Finally, the integrand here depends on $y$ only through $yZ_2$, so we can replace the expectation over $y\in G$ by an expectation over $yZ_2\in G/Z_2$, so
\[
	\cE(\Pr(G))\leq |G/Z_2| \cdot|A| \leq |G/Z_2|\cdot |[G,G]|.
\]
\end{proof}

\begin{proof}[of Theorem~\ref{thm:main}]
Fix $\eta:\N\to(0,1)$ and $G$. By the lemma we can find another decreasing function $\eta':\N\to(0,1)$ such that
\[
	\eta'(|[G,G]|) \leq \eta(\cE(\Pr(G)))
\]
for all finite groups $G$. Applying Theorem~\ref{thm:neumann2} with $\eta'$, we find some $M=M(\eta)$ and a subgroup $H\leq G$ such that $|[H,H]|\leq M$ and such that no more than $\eta'(|[H,H]|)|G|^2$ pairs $(x,y)\in G^2\setminus H^2$ satisfy $[x,y]=1$. Thus
\[
	\Pr(G) = \frac{1}{|G/H|^2}\Pr(H)+\eps,
\]
where $\cE(\Pr(H))$ is bounded by a function of $|[H,H]|\leq M$ by the lemma, and
\begin{align*}
	\eps 
	&= \frac{|\{(x,y)\in G^2\setminus H^2 : [x,y]=1\}|}{|G|^2}\\
	&\leq\eta'(|[H,H]|)\\
	&\leq\eta(\cE(\Pr(H)))
\end{align*}
by the choice of $\eta'$.
\end{proof}

\section{Joseph's conjectures}\label{sec:joseph}

The following lemma is well known.

\begin{lemma}
For every $x>0$ and $m\in\N$ the supremum of the set of $q<x$ such that $\cE(q)\leq m$ is strictly less than $x$.
\end{lemma}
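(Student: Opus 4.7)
The plan is to prove the lemma by induction on $m$. The base case $m = 0$ is immediate: the only $q$ with $\cE(q) \leq 0$ is $q = 0$, so the supremum equals $0 < x$. For the inductive step, I would argue by contradiction. Suppose for some $x > 0$ the supremum equals $x$, so that there is a sequence of rationals $q_k < x$ with $\cE(q_k) \leq m$ and $q_k \to x$. Writing each $q_k = 1/n_1^{(k)} + \cdots + 1/n_{m_k}^{(k)}$ with $m_k \leq m$ and denominators sorted $n_1^{(k)} \leq \cdots \leq n_{m_k}^{(k)}$, a pigeonhole passage to a subsequence lets me fix $m_k = m'$ for all $k$, where $m' \in \{1,\dots,m\}$; the value $m' = 0$ is ruled out since $q_k \to x > 0$.

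The key step is to bound the smallest denominator. From $1/n_1^{(k)} \geq q_k/m'$ and $q_k \to x > 0$ I get that $n_1^{(k)}$ is bounded, so on a further subsequence I may take $n_1^{(k)} = n$ constant. Set $q_k' = q_k - 1/n$ and $x' = x - 1/n$. Then $q_k' \geq 0$, $\cE(q_k') \leq m' - 1 \leq m - 1$, $q_k' < x'$, and $q_k' \to x'$. Moreover $x' > 0$: if instead $1/n \geq x$, then $q_k \geq 1/n \geq x$ would contradict $q_k < x$. The inductive hypothesis applied to $x'$ and $m - 1$ gives $\sup\{q < x' : \cE(q) \leq m - 1\} < x'$, contradicting the existence of our sequence $q_k' \to x'$ consisting of admissible elements.

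I do not anticipate any real obstacle: the crux is simply the observation that in any representation $q = 1/n_1 + \cdots + 1/n_{m'}$ of a rational bounded away from zero, the smallest denominator is automatically bounded, which lets us subtract one term off and reduce the complexity by one. All other steps are bookkeeping.
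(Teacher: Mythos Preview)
Your proof is correct and rests on the same idea as the paper's: in any Egyptian representation of a number bounded away from zero the smallest denominator is bounded, so one can pass to subsequences on which bounded denominators stabilize and the remaining terms vanish in the limit. The paper carries this out in one stroke---passing to a subsequence on which each of the $m$ denominators is either eventually constant or tends to infinity, and observing that the limit $1/n_1+\cdots+1/n_k$ is then one of the partial sums and hence strictly less than $x$---whereas you peel off one stabilized denominator at a time via induction on $m$; the content is the same.
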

\begin{proof}
Suppose for contradiction that $n_{1i}$, ..., $n_{mi}$ are $m$ sequences of positive integers such that for all $i$
\[
	1/n_{1i} + \cdots + 1/n_{mi} < x
\]
and
\[
	1/n_{1i} + \cdots + 1/n_{mi} \to x.
\]
After rearranging and passing to a subsequence we may assume that $n_{1i} = n_1, \dots, n_{ki} = n_k$ are constants while $n_{k+1,i}, \dots, n_{mi} \to \infty$. But then
\[
	1/n_{1i} + \cdots + 1/n_{mi} \to 1/n_1 + \cdots + 1/n_k < x,
\]
a contradiction.
\end{proof}

\begin{proof}[of Corollary~\ref{cor:joseph}]
Let $x>0$ be a limit point of $\cP=\{\Pr(G): G\text{ a finite group}\}$. We will prove that $x$ is rational, and that if $p_n\to x$ then $p_n\geq x$ for all but finitely many $n$.

For $m\in\N$ let
\[
	Q(m,x) = \sup\{q<x : \cE(q) \leq m\}
\]
and define
\[
	\eta_x(m) = (x - Q(m,x))/2.
\]
By the lemma $\eta_x(m)>0$ for every $m$, so by Theorem~\ref{thm:main} there is some $M=M(\eta_x)$ such that every $p\in\cP$ has the form $q+\eps$, where $\cE(q)\leq M$ and $0\leq \eps\leq \eta_x(\cE(q))$.

Fix some such $p=q+\eps$ and suppose $q<x$. Then
\[
	\eps\leq \eta_x(\cE(q)) \leq (x-q)/2,
\]
so
\[
	p = q+\eps \leq (q+x)/2 \leq (Q(M,x)+x)/2 = x - \eta_x(M),
\]
so $p$ is bounded away from $x$. Thus if $p_n=q_n+\eps_n\to x$ then we must have $p_n\geq q_n\geq x$ for all but finitely many $n$. In particular $q_n\to x$, but the set of Egyptian fractions of complexity at most $M$ is closed, so this implies $\cE(x)\leq M$, so $x\in\Q$.
\end{proof}

Corollary \ref{cor:josephbilinear} is proved in exactly the same way.

\section{The order type of $\cP$}\label{sec:ordertype}

\def\ccP{\overline{\cP}}
Having shown in previous sections that $(\cP,>)$ is well ordered, we show in this final section that $(\cP,>)$ has order type either $\omega^\omega$ or $\omega^{\omega^2}$. First we need a standard definition.
%

For $X$ a closed subset of $[0,1]$ let $X'\subset X$ be the set of limit points of $X$. Iterating this operation, define $X^\alpha$ for ordinals $\alpha$ as follows: 
\begin{align*}
	&X^0 = X,\\
	&X^{\alpha+1} = (X^\alpha)',\\
	&X^\alpha = \bigcap_{\beta<\alpha} X^\beta \quad\text{ if }\alpha\text{ is a limit ordinal.}
\end{align*}
If $X$ is countable then there is a unique countable ordinal $\alpha$ for which $X^\alpha$ is finite and nonempty; we call $\alpha$ the \emph{Cantor-Bendixson rank} of $X$. If $X$ happens to be well ordered by $>$ then its order type is at most $\omega^\alpha + 1$, and if $X^\alpha=\{0\}$ and $\alpha>0$ then in fact the order type of $X$ is exactly $\omega^\alpha+1$. (For a detailed introduction to Cantor-Bendixson rank see Dasgupta~\cite[Chapter~16]{dasgupta}.)

\begin{lemma}
Let $X$ be a countably infinite closed subset of $[0,1]$ closed under multiplication, and let $\alpha$ be the Cantor-Bendixson rank of $X$. Then $X^\alpha=\{0\}$ and $\alpha=\omega^\beta$ for some ordinal $\beta$.
\end{lemma}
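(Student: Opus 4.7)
The plan is to establish the semigroup inclusion
\[
	X^\gamma \cdot X^\delta \subset X^{\gamma+\delta}
\]
for all ordinals $\gamma,\delta$, and derive both conclusions from it.

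First I would show, by a joint transfinite induction on $\gamma$, that each $X^\gamma$ is closed under multiplication and that $0 \in X^\gamma$ for every $\gamma$ up to the Cantor-Bendixson rank $\alpha$. The point is that if $X^\gamma$ is infinite, multiplicatively closed, and contained in $[0,1]$, then it must contain some $x\in(0,1)$ (else $|X^\gamma|\leq 2$), and then the powers $x^n$ give an infinite sequence in $X^\gamma$ converging to $0$, so $0$ is a limit point of every infinite $X^\gamma$. Once this is in hand, the semigroup inclusion follows by a further transfinite induction on $\delta$, with $\gamma$ arbitrary. The successor step is a routine ``multiply a convergent sequence by a fixed nonzero element'' argument (with the $0$ cases absorbed by $0\in X^{\gamma+\delta}$), and the limit step uses continuity of $\delta\mapsto\gamma+\delta$, namely $\bigcap_{\delta'<\delta} X^{\gamma+\delta'} = X^{\gamma+\delta}$ for limit $\delta$, since $\{\gamma+\delta':\delta'<\delta\}$ is cofinal in $\gamma+\delta$.

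Given the semigroup inclusion, I deduce $X^\alpha=\{0\}$ as follows. The set $X^\alpha$ is finite, nonempty, contains $0$, and is multiplicatively closed; any $x\in(0,1)\cap X^\alpha$ would give the infinite subset $\{x^n\}$ of $X^\alpha$, so $X^\alpha\subset\{0,1\}$. To rule out $1\in X^\alpha$ I would show $1\notin X'$: if $y_n\in X\setminus\{1\}$ with $y_n\to 1$, then for every $z\in X\setminus\{0\}$ the products $y_n z$ are distinct elements of $X$ converging to $z$, so $z\in X'$; combined with $0\in X'$ this forces $X=X'$, making $X$ a nonempty perfect subset of $[0,1]$ and hence uncountable, contradicting countability. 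Since $\alpha\geq 1$ we have $X^\alpha\subset X'$, so $1\notin X^\alpha$, whence $X^\alpha=\{0\}$.

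For $\alpha=\omega^\beta$ I use the standard characterization: it suffices to verify additive indecomposability, i.e.\ $\gamma+\delta<\alpha$ for all $\gamma,\delta<\alpha$. Given such $\gamma,\delta$, the sets $X^\gamma$ and $X^\delta$ are infinite, so each contains a nonzero element, say $x$ and $y$. The semigroup inclusion gives $xy\in X^{\gamma+\delta}$, and $xy>0$ means $xy\notin X^\alpha=\{0\}$, so $\gamma+\delta<\alpha$. The main technical point is the semigroup inclusion: the handling of zero elements relies on the preliminary fact that $0$ persists in every $X^\gamma$, and the limit case of the induction on $\delta$ needs the continuity of ordinal addition in its second argument.
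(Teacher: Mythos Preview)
Your approach is essentially the paper's: establish the product rule for Cantor--Bendixson derivatives by transfinite induction and read off both conclusions. The paper phrases the second conclusion via $\alpha\geq\sup_{\gamma<\alpha}(\gamma\cdot\omega)$ rather than additive indecomposability, but these are equivalent formulations.

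Two small remarks. First, the inclusion $X^\gamma\cdot X^\delta\subset X^{\gamma+\delta}$ is false as stated for \emph{all} ordinals: with $\gamma=\delta=\alpha$ one has $X^\alpha\cdot X^\alpha=\{0\}$ but $X^{2\alpha}=\emptyset$, and your justification ``$0\in X^{\gamma+\delta}$'' only holds for $\gamma+\delta\leq\alpha$. The paper avoids this by restricting to positive elements throughout; since your application only uses positive $x,y$ anyway, the fix is cosmetic. Second, your separate argument excluding $1$ from $X^\alpha$ (via $1\notin X'$ and perfect sets being uncountable) is correct but unnecessary: the paper simply notes that for any positive $x\in X^\gamma$ and any fixed $y\in X\cap(0,1)$ the sequence $xy^n$ lies in $X^\gamma$ and tends to $0$, so $0\in X^{\gamma+1}$; applied at $\gamma=\alpha$ this rules out every positive element of $X^\alpha$, including $1$, in one stroke.
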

\begin{proof}
By induction on $\gamma$ if $x\in X^\gamma$ and $y\in X$ and $y>0$ then
\[
	xy\in X^\gamma.
\]
Hence by induction on $\delta$ if $x\in X^\gamma$ and $x>0$ and $y\in X^\delta$ and $y>0$ then
\[
	xy\in X^{\gamma+\delta}.
\]

Suppose $x\in X^\gamma$ and $x>0$. Fix $y\in X\cap(0,1)$. Then for all $n$ we have
\[
	xy^n\in X^\gamma,
\]
and $xy^n\to 0$, so
\[
	0\in X^{\gamma+1}.
\]
Hence we must have $X^\alpha=\{0\}$.

Now suppose $\gamma<\alpha$. Since $0\in X^\alpha\subset X^{\gamma+1}$ there must be some $x\in X^\gamma\cap(0,1)$. But then for all $n$ we have
\[
	x^n\in X^{\gamma\cdot n},
\]
so
\[
	0\in X^{\gamma \cdot\omega}.
\]
We deduce that
\[
	\alpha \geq \sup_{\gamma<\alpha} (\gamma\cdot\omega).
\]
Let $\omega^\beta$ be the largest power of $\omega$ such that $\omega^\beta\leq\alpha$. If $\omega^\beta<\alpha$ then
\[
	\alpha \geq \sup_{\gamma<\alpha} (\gamma\cdot\omega)\geq \omega^\beta\cdot\omega = \omega^{\beta+1},
\]
a contradiction, so we must have $\alpha=\omega^\beta$.
\end{proof}

Let $\ccP$ be the closure of $\cP$ in $[0,1]$. By the formula
\[
	\Pr(G\times H) = \Pr(G)\Pr(H)
\]
we know that $\cP$, and hence $\ccP$, is closed under multiplication, so if $\alpha$ is the Cantor-Bendixson rank of $\ccP$ then the lemma and the previous discussion implies that $\ccP$ has order type $\omega^\alpha+1$, so $\cP$ has order type $\omega^\alpha$, and moreover $\alpha=\omega^\beta$ for some $\beta$. Since for instance $1/2\in \ccP'$ we know that $\beta>0$. We will prove that $\alpha\leq\omega^2$, and thus $\alpha\in\{\omega,\omega^2\}$.

%

For $n\in\N$ let $\cE_n = \{q: \cE(q)\leq n\}$ be the set of Egyptian fractions of complexity at most $n$. The following lemma follows from the proofs of Theorem~\ref{thm:main}, Corollary~\ref{cor:joseph}, and Theorem~\ref{thm:neumann2}.

\begin{lemma}
For every $\eps_0>0$ there exist $k\in\N$ and a function $m:(0,1]\to\N$ such that for all $\eps_1,\dots,\eps_k>0$ the set
\[
	\ccP\cap[0,\eps_0]^c\cap\bigcap_{i=0}^{k-1}(\cE_{m(\eps_i)} + [0,\eps_{i+1}])^c
\]
is finite. 
\end{lemma}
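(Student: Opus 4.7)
The plan is to iterate Theorem~\ref{thm:main} with a sequence of decreasing functions $\eta_{\eps_i}$, one for each scale $\eps_i$, in the spirit of the proof of Corollary~\ref{cor:joseph}. The function $m$ and the bound $k$ are both extracted from this iteration.

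For each $\eps\in(0,1]$, pick a strictly decreasing $\eta_\eps:\N\to(0,1)$ with $\eta_\eps(1)\leq\eps$, and define $m(\eps):=M(\eta_\eps)$ via Theorem~\ref{thm:main}. Then every $p\in\cP$ admits a decomposition $p=q+r$ with $\cE(q)\leq m(\eps)$ and $r\leq\eta_\eps(\cE(q))\leq\eps$. Since $\cE_n$ is closed in $[0,1]$ (denominators tending to infinity drop out under subsequential limits) and $\cE$ is lower semicontinuous, the same decomposition extends to every $p\in\ccP$. Any $p$ lying in the bad set
\[
	\ccP\cap[0,\eps_0]^c\cap\bigcap_{i=0}^{k-1}(\cE_{m(\eps_i)}+[0,\eps_{i+1}])^c
\]
must therefore, for each $i=0,\dots,k-1$, admit such a decomposition $p=q_i+r_i$ in which the exclusion from $\cE_{m(\eps_i)}+[0,\eps_{i+1}]$ forces $r_i\in(\eps_{i+1},\eps_i]$.

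To constrain $p$ to a finite set after $k$ iterations I would appeal to the proofs of Theorem~\ref{thm:neumann2} and of the lemma preceding Theorem~\ref{thm:main}: the Egyptian part $q_i$ arises concretely as $\Pr(H_i)/|G/H_i|^2$ for a normal subgroup $H_i\leq G$ of $\eps_0$-bounded index and bounded $|[H_i,H_i]|$, while the excess $r_i>\eps_{i+1}$ reflects an anomalously large mass of commuting pairs in $G^2\setminus H_i^2$. By the amplification argument from the proof of Theorem~\ref{thm:neumann2}, this anomaly forces the existence of a strictly larger invariant subgroup to play the role of $H_{i+1}$ at the next iteration. Since $|G/H_i|$ is a positive integer bounded purely in terms of $\eps_0$ and must strictly decrease with $i$, the chain stabilizes within $k=k(\eps_0)$ steps, at which point the decomposition of $p$ is pinned down to one of finitely many possibilities.

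The main obstacle is that elements of $\ccP$ may only arise as limits $\Pr(G_n)\to p$ with varying underlying groups $G_n$, so the single-group subgroup chain above must be constructed coherently across the sequence. I expect this to be handled by a diagonal argument, or more cleanly by working inside the ultrafinite group $\prod_{n\to p}G_n$ as in Theorem~\ref{thm:neumann2ultra}: the successive $H_i$ become internal normal subgroups of a single ultrafinite group, and the termination of the iteration becomes a genuine finiteness statement rather than an asymptotic one.
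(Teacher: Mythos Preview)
Your overall strategy---unwind the amplification loop in the proof of Theorem~\ref{thm:neumann2} and use that the chain of subgroups must terminate in an $\eps_0$-bounded number of steps---is exactly the paper's. But two steps in your execution do not work as written.

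First, your choice $m(\eps)=M(\eta_\eps)$ is the bound produced by the \emph{entire} iteration of Theorem~\ref{thm:neumann2}, not by a single amplification step. Applying Theorem~\ref{thm:main} separately with each $\eta_{\eps_i}$ yields subgroups $H_i$ that are a priori unrelated to one another, so there is no chain $H_0\subsetneq H_1\subsetneq\cdots$ and nothing to terminate. What is needed is to run the amplification loop \emph{once}, taking $\eps_{i+1}$ as the threshold at step $i$; then $m(\eps)$ must be the single-step bound. The paper sets $m(\eps)=\max(h(b(\eps)),h(n(\eps_0)))$, where $b$, $h$, $n$ are the bounds from Baer's, Hall's, and Neumann's theorems, precisely so that if the Egyptian complexity at step $i$ is at most $m(\eps_i)$ and the excess exceeds $\eps_{i+1}$, then the complexity at step $i{+}1$ is at most $m(\eps_{i+1})$.

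Second, your endgame does not establish finiteness. Even once the chain stabilises at $H=G$ and you know $\cE(p)$ is bounded, the set $\cE_N\cap(\eps_0,1]$ is infinite, so ``pinned down to finitely many possibilities'' does not follow. The paper instead shows that for $x$ in the bad set the iteration yields $\eta_x(M)\geq\eps_k$, and then invokes the gap $(x-\eta_x(M),x)\cap\cP=\emptyset$ from the proof of Corollary~\ref{cor:joseph} to conclude there are at most $1/\eps_k$ such $x$. The passage from $\cP$ to $\ccP$ is then immediate because the excluded set is open; no ultrafinite detour is required.
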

\begin{proof}
Define $\eta_x(m) = (x - Q(m,x))/2$ as in the proof of Corollary~\ref{cor:joseph}. By inspecting the proofs of Theorem~\ref{thm:main} and Theorem~\ref{thm:neumann2}, we see that if $x>\eps_0$ the constant $M=M(\eta_x)$ can be taken to be the result of iterating some function
\[
	t\mapsto b(\eta_x(h(t)))
\]
some $n(\eps_0)$ times starting with $n(\eps_0)$, where
\begin{itemize}
	\item $b:(0,1]\to\N$ is a decreasing function coming from Baer's theorem (Lemma~\ref{lem:baer}),
	\item $h:\N\to\N$ is an increasing function coming from Hall's theorem (Lemma~\ref{lem:hall}),
	\item $n:(0,1]\to\N$ is a decreasing function coming from Neumann's theorem (Theorem~\ref{thm:neumann}).
\end{itemize}

Let $k=n(\eps_0)+1$ and $m(\eps)=\max(h(b(\eps)),h(n(\eps_0)))$, and suppose
\begin{equation}\label{xcp}
	x\in\cP\cap[0,\eps_0]^c\cap\bigcap_{i=0}^{k-1}(\cE_{m(\eps_i)} + [0,\eps_{i+1}])^c.
\end{equation}
Define the sequence $t_0,t_1,\dots,t_k$ by
\begin{align*}
	t_0 &= n(\eps_0),\\
	t_{i+1} &= b(\eta_x(h(t_i))) \quad\text{for }0\leq i<k.
\end{align*}
Then inductively
\begin{align*}
	h(t_i) &\leq m(\eps_i),\\
	\eta_x(h(t_i)) &\geq \eps_{i+1},\\
	t_{i+1} &\leq b(\eps_{i+1}),\\
	h(t_{i+1}) &\leq m(\eps_{i+1})
\end{align*}
for all $i$ in the range $0\leq i\leq k-1$, so
\[
	\eta_x(M) = \eta_x(t_{k-1}) \geq \eta_x(h(t_{k-1})) \geq \eps_k.
\]
But from the proof of Corollary~\ref{cor:joseph} we know that
\[
	(x-\eta_x(M),x)\cap\cP = \emptyset,
\]
so there can be at most $1/\eps_k$ elements $x$ in the set \eqref{xcp}, and the lemma follows.
\end{proof}

\begin{proof}[of Theorem~\ref{thm:ordertype}]
By the previous discussion it suffices to prove that $\alpha\leq\omega^2$. By the lemma we have
\[
	\ccP\subset[0,\eps_0]\cup\bigcup_{i=0}^{k-1}(\cE_{m(\eps_i)} + [0,\eps_{i+1}]) \cup F
\]
for some finite set $F$. From the rule $(X\cup Y)'=X'\cup Y'$ we have
\[
	\ccP'\subset[0,\eps_0]\cup\bigcup_{i=0}^{k-1}(\cE_{m(\eps_i)} + [0,\eps_{i+1}]).
\]
But since this holds for all $\eps_k>0$ we have
\[
	\ccP'\subset[0,\eps_0]\cup\bigcup_{i=0}^{k-2}(\cE_{m(\eps_i)} + [0,\eps_{i+1}])\cup\cE_{m(\eps_{k-1})}.
\]
Now using $\cE_n'=\cE_{n-1}$ and the rule $(X\cup Y)'=X'\cup Y'$ again we have
\[
	\ccP^{1+m(\eps_{k-1})}\subset[0,\eps_0]\cup\bigcup_{i=0}^{k-2}(\cE_{m(\eps_i)} + [0,\eps_{i+1}]).
\]
In particular
\[
	\ccP^{\omega}\subset[0,\eps_0]\cup\bigcup_{i=0}^{k-2}(\cE_{m(\eps_i)} + [0,\eps_{i+1}]).
\]
Repeating this argument another $k-1$ times, we have
\begin{align*}
	&\ccP^{\omega\cdot 2} \subset[0,\eps_0]\cup\bigcup_{i=0}^{k-3}(\cE_{m(\eps_i)} + [0,\eps_{i+1}]),\\
	&\quad\vdots\\
	&\ccP^{\omega\cdot(k-1)} \subset[0,\eps_0]\cup(\cE_{m(\eps_0)} + [0,\eps_1]),\\
	&\ccP^{\omega\cdot k} \subset [0,\eps_0].
\end{align*}
Thus
\[
	\ccP^{\omega^2}\subset[0,\eps_0]
\]
for all $\eps_0>0$, so
\[
	\ccP^{\omega^2}\subset\{0\},
\]
as claimed.
%
\end{proof}

The same argument applies unchanged in the case of $\cP_\text{b}$.

{\bf Acknowledgement.} I thank Freddie Manners for the idea in the proof of Theorem~\ref{thm:neumannbilinear} of bounding the size of $\phi(A',B')$ by showing that every value is taken many times, an idea which greatly simplified the proof.

\bibliography{nil}
\bibliographystyle{alpha}

\affiliationone{
   Sean Eberhard\\
   Mathematical Institute\\
	 Andrew Wiles Building\\
   Radcliffe Observatory Quarter\\
   Woodstock Road\\
   Oxford\\
   OX2 6GG\\
   UK\\
   \email{eberhard@maths.ox.ac.uk}
   }
\end{document}